\tikzstyle{arrow} = [thick,->,>=stealth]
\tikzset{join/.code=\tikzset{after node path={%
\ifx\tikzchainprevious\pgfutil@empty\else(\tikzchainprevious)%
edge[every join]#1(\tikzchaincurrent)\fi}}}
\tikzset{>=stealth',every on chain/.append style={join},
         every join/.style={->}}
\tikzstyle{labeled}=[execute at begin node=$\scriptstyle,
\theoremstyle{definition}
\newtheorem{theorem}{Theorem}[section]
\newtheorem{lemma}[theorem]{Lemma}
\newtheorem{proposition}[theorem]{Proposition}
\newtheorem{corollary}[theorem]{Corollary}
\newtheorem{definition}[theorem]{Definition}
\newtheorem{example}[theorem]{Example}
\theoremstyle{remark}
\newtheorem{remark}[theorem]{Remark}
\newcommand{\Hom}{{\rm Hom}}
\newcommand{\m}{\mathfrak {m}}
\newcommand{\n}{\mathfrak {n}}
\newcommand{\g}{\mathfrak {g}}
\newcommand{\tal}{\Tilde{\alpha}}
\newcommand{\tr}{\triangleright}
\newcommand{\bcl}{\Bar{\circ}_l}
\newcommand{\bcr}{\Bar{\circ}_r}
\begin{document}
\title{
{On crossed modules of pre-Lie algebras}
 }\vspace{2mm}
\author{  Xuedan Luo\\
\it{Chern Institute of Mathematics, Nankai University},\\ \vspace{2mm} \it{Tianjin, 300071, China}\\
  Email:~  1120230015@mail.nankai.edu.cn}

\date{}

\maketitle

\begin{abstract}
     Given a Lie algebra $\g$ and a $\g$-module $V$, it is due to Gerstenhaber that there is an isomorphism between $H^3(\g,V)$ and the group of equivalence classes of crossed modules with kernel $V$ and cokernel $\g$. The goal of this article is to obtain this result for pre-Lie algebras and their modules.
\end{abstract}

\section{Introduction}
The low degree cohomologies of Lie algebras have some well-known applications. The $1$-cocycles with respect to the adjoint representation define the derivations. 
Let $\g$ be a Lie algebra and $V$ be a $\g$-module. Given a $2$-cocycle, one can associate $\g\oplus V$ with a Lie algebra structure to get an exact sequence:
\[0\to V\to\g\oplus V\to\g\to 0,\]
which is called an abelian extension of $\g$ by $V$. This association induces an bijection between $H^2(\g,V)$ and the set of equivalence classes of abelian extensions, so the second cohomology classifys the abelian extensions. 

The notion of crossed module was proposed by Whitehead \cite{Wh} in the context of homotopy theory, which consists of a homomorphism $\mu:\m\to\n$ of Lie algebras and a compatible action of $\n$ on $\m$. We attribute to Gerstenhaber \cite{G 1964, G 1966} the theorem stating that the group of equivalence classes of crossed modules with kernel $V$ and cokernel $\g$, is isomorphic to $H^3(\g,V)$. One may express a crossed module with kernel $V$ and cokernel $\g$ by the following exact sequence:
\[0\to V\to\m\xrightarrow{\mu}\n\to\g\to 0,\]
which is called a crossed module extension of $\g$ by $V$.
So roughly, we may restate the result of Gerstenhaber by: the third cohomology classifys the crossed module extensions. The original proof is quite abstract using the $Ext$ functor, and we recommend \cite{W} for a more constructive and algebraic proof.

The goal of this article is to get the version of this result in pre-Lie algebras. For versions of this result in Lie $2$-algebras, Lie-Rinehart algebras, pre-Lie Rinehart algebras and zinbiel algebas, see \cite{LL,CLL,CLP,Z}.

The notion of pre-Lie algebra was introduced by Gerstenhaber \cite{G 1966} in the study of deformations of associative algebras.
A pre-Lie algebra is a vector space $\g$ equipped with a map $\circ:\g\otimes\g\to\g$ such that the associator is left-symmetry, i.e.,
$(x,y,z)=(y,x,z)$, where $(x,y,z):=(x\circ y)\circ z-x\circ (y\circ z)$ for $x,y,z\in\g$.
  It arises naturally from many areas, such as the left-invariant affine structures on Lie groups, complex structures on Lie algebras and so on, see \cite{B,Burde}. Its name is due to its closed relation to the Lie algebras, i.e., the commutator defines a Lie algebra on the underlying space. Moreover, the left multiplication of pre-Lie algebra gives a representation of the commutator Lie algebra. 


The paper is organized as follows. Section $2$ constitutes sufficient preparation.  Section $3$ states and proves our main theorem (Theorem \ref{m th}) and get a corollary (Corollary \ref{cor of m th}). Section $4$ summarizes some relations between crossed modules of pre-Lie algebras and crossed modules of Lie, Rota-Baxter Lie, and dendriform algebras.

\section{Preliminaries}
A {\bf pre-Lie algebra} $(\g,\circ)$ is a vector space $\g$ equipped with a bilinear map $\circ:\g\otimes \g\to \g$ such that 
\begin{equation}\label{defi: pre}
    (x\circ y)\circ z-x\circ (y\circ z)=(y\circ x)\circ z-y\circ (x\circ z),\qquad\forall x,y,z\in \g.
\end{equation}
 Let $(\g,\circ)$ be a pre-Lie algebra. Then there is a Lie algebra structure $[\cdot,\cdot]:\g\wedge \g\to \g$ on $\g$ defined by $[x,y]:=x\circ y-y\circ x$. We call $(\g,[\cdot,\cdot])$ the sub-adjacent Lie algebra of $(\g,\circ)$ and denote it by $\g^c$.
 \begin{definition}
    A {\bf representation} of a pre-Lie algebra $(\g,\circ)$ is a pair $(V,\circ_l,\circ_r)$, where $V$ is a vector space, $\circ_l:\g\otimes V\to V$ is a representation of the sub-adjacent Lie algebra $\g^c$ on $V$, and $\circ_r:V\otimes \g\to V$ is a bilinear operation satisfying 
    \begin{eqnarray}
        (x\circ_l u)\circ_r y-x\circ_l (u\circ_r y)&=&(u\circ_r x)\circ_r y-u\circ_r (x\circ y),\nonumber\qquad\forall x,y\in\g,u\in V.
    \end{eqnarray}
   \end{definition}
   In this case, we also say that $V$ is a $\g$-module. There is a natural representation $(\g,\circ_L,\circ_R)$ of $\g$ on itself (or on its two-sided ideal $R$, i.e., a subspace such that $R\circ\g\subset R,\ \g\circ R\subset R$) defined by $x\circ_L y=x\circ y,\ y\circ_R x=y\circ x$.  
\begin{definition}
    Given two pre-Lie algebras $(\m,\circ_m)$ and $(\n,\circ_n)$, an {\bf action} of $\n$ on $\m$ is a representation $(\circ_l,\circ_r)$ of $\n$ on $\m$ satisfying $2$ more equalities:
    \begin{eqnarray*}
        (x\circ_l u)\circ_m v-x\circ_l (u\circ_m v)&=&(u\circ_r x)\circ_m v-u\circ_m (x\circ_l v),\\
        (u\circ_m v)\circ_r x-u\circ_m (v\circ_r x)&=&(v\circ_m u)\circ_r x-v\circ_m (u\circ_r x),
    \end{eqnarray*}
    for all $x\in\n,u,v\in\m$.
\end{definition}
We see that the action reduces to a representation when the pre-Lie structure on $\m$ is trivial.

The cohomology of pre-Lie algebra $(\g,\circ)$ with a representation $(V,\circ_l,\circ_r)$ is given in \cite{D}. 
The set of $n$-cochains is 
\[C^n(\g,V)=\Hom (\wedge^{n-1}\g\otimes\g,V),\qquad n\geq 1.\] 
The coboundary operator $d:C^n(\g,V)\to C^{n+1}(\g,V)$ is defined by
\begin{eqnarray*}
    (df)(x_1,\cdots,x_{n+1}) &=&\sum\limits_{i=1}^n (-1)^{i+1}x_i\circ_l f(x_1,\cdots,\hat{x_i},\cdots,x_{n+1})
    \\
    &&+\sum\limits_{i=1}^{n} (-1)^{i+1}f(x_1,\cdots,\hat{x_i},\cdots,x_n,x_i)\circ_r x_{n+1}\\
    &&-\sum\limits_{i=1}^n(-1)^{i+1}f(x_1,\cdots,\hat{x_i},\cdots,x_n,x_i\circ x_{n+1})\\
    &&+\sum\limits_{1\leq i< j\leq n}(-1)^{i+j}f([x_i,x_j],x_1,\cdots,\hat{x_i},\cdots,\hat{x_j},\cdots,x_{n+1}),
\end{eqnarray*}
for $x_1,\ldots,x_{n+1}\in\g, f\in C^n(\g,V)$.
Here $[x_i,x_j]:=x_i\circ x_j-x_j\circ x_i$.
In particular, for $n=1$, $(df)(x,y)=x\circ_l f(y)+f(x)\circ_r y-f(x\circ y)$ and for $n=2$,
\begin{eqnarray*}     
    (df)(x,y,z)&=& x\circ_l f(y,z)-y\circ_l f(x,z)+f(y,x)\circ_r z-f(x,y)\circ_r z\\&&
    -f(y,x\circ z)+f(x,y\circ z)-f([x,y],z).
\end{eqnarray*}
\begin{definition}
    Let $(\m,\circ_m),(\n,\circ_n)$ be pre-Lie algebras with an action $(\circ_l,\circ_r)$ of $\n$ on $\m$. A {\bf crossed module} is a homomorphism $\mu:\m\to \n$ of pre-Lie algebras satisfying
    \begin{align*}
        \mu(u\circ_r x)&=\mu(u)\circ_n x,\qquad \mu(x\circ_l u)=x\circ_n \mu(u),\\
        \mu(u)\circ_l v&=u\circ_m v=u\circ_r \mu(v),\qquad \forall x\in \n,\ u,v\in \m,
    \end{align*}
    denoted by $(\m\xrightarrow{\mu}\n,\circ_l,\circ_r)$.
\end{definition}
\begin{example}
    \begin{enumerate}
        \item[1.] Let $(\n,\circ)$ be a pre-Lie algebra, $R$ be its two-sided ideal and $i$ be the inclusion. Then $(R\xrightarrow{i}\n,\circ_L,\circ_R)$ is a crossed module. In particular, $(\n\xrightarrow{id}\n,\circ_L,\circ_R)$ is a crossed module.
        \item[2.] Let $f:\m\to \n$ be a homomorphism of pre-Lie algebras. Then $ker(f)$ is a two-sided ideal of $(\m,\circ_m)$, thus by the example above,  $(ker(f)\xrightarrow{i}\m,\circ_{L},\circ_R)$ is a crossed module. 
        \item[3.] Let $(V,\circ_l,\circ_r)$ be a representation over pre-Lie algebra $\g$. Endow $V$ with the trivial pre-Lie structure, then $(V\xrightarrow{0}\g,\circ_l,\circ_r)$ is a crossed module.
    \end{enumerate}
\end{example}
Let ${\bf preLie}$ be the category whose objects are pre-Lie algebras, morphisms are homomorphisms of pre-Lie algebras.
\begin{definition}
    Let $(\g,\circ)$ be a pre-Lie algebra, and let $(V,\circ_l,\circ_r)$ be a representation of it. A {\bf crossed module extension} of $\g$ by $V$ is an exact sequence in category {\bf preLie}:
    \begin{equation}\label{cmext}
        0\to V\xrightarrow{i}\m\xrightarrow{\mu}\n\xrightarrow{\pi}\g\to 0
    \end{equation}
    such that $\m\xrightarrow{\mu}\n$ is a crossed module. 
\end{definition}

\begin{remark}\label{re mo}
Given an exact sequence \eqref{cmext} such that $\m\xrightarrow{\mu}\n$ is a crossed module, the action of $\n$ on $\m$ uniquely induces a representation of $\g$ on $V$, and we denote them by the same notation $(\circ_l,\circ_r)$. Choose any linear section $\rho:\g\to\n$ of $\pi$ such that $\pi\rho=id$, this representation is given by
\[i(x\circ_l u):=\rho(x)\circ_l i(u),\quad i(u\circ_r x):=i(u)\circ_r\rho(x),\quad\forall x\in\g,u\in V.\]
It is independent of the choice of $\rho$ because if $\rho_1$ and $\rho_2$ are two sections of $\pi$, then $(\rho_1-\rho_2)\in ker\pi=im\mu$, thus $i(u)\circ_r(\rho_1-\rho_2)(x)=i(u)\circ_r\mu(m)$ for some $m\in\m$. By the crossed module properties, this equals $i(u)\circ_m m$, which vanishes since $i(V)$ is the center of $\m$ (say $i(v)\circ_m m=\mu(i(v))\circ m=0, m\in\m,v\in V$). Similar argument applys to $\rho(x)\circ_l i(u)$.
\end{remark}

\begin{example}
    Given a crossed module $\m\xrightarrow{\mu}\n$, we get a natural crossed module extension \eqref{cmext}, where $\g=coker(\mu)=\n/im\mu$ and $V=ker(\mu)$ as vector spaces. By the crossed module properties, $im\mu$ is an ideal of $\n$, therefore $\g$ has an induced pre-Lie structure from $\n$. By Remark \ref{re mo}, $V$ is indeed a representation of $\g$. 
\end{example}

\begin{definition}\label{eq cm ext}
    Two crossed module extensions of $\g$ by $V$ are {\bf elementary equivalent}, denoted by $E\hookrightarrow E'$, if there exists homomorphisms $r$ and $s$ of pre-Lie algebras such that 
    \begin{itemize}
        \item the following diagram commutes:
    \\
\begin{center}
\begin{tikzpicture}
  \matrix (m) [matrix of math nodes, row sep=3em, column sep=3em]
    { E:& 0 & V  & \m & \n & \g & 0 \\
      E':& 0 & V & \m' & \n' & \g & 0; \\ };
  { [start chain] \chainin (m-1-2);
    \chainin (m-1-3);
    { [start branch=V] \chainin (m-2-3)
        [join={node[right,labeled] {id}}];}
    \chainin (m-1-4) [join={node[above,labeled] {i}}];
    { [start branch=m] \chainin (m-2-4)
        [join={node[right,labeled] {r}}];}
    \chainin (m-1-5) [join={node[above,labeled] {\mu}}];
    { [start branch=n] \chainin (m-2-5)
        [join={node[right,labeled] {s}}];}
    \chainin (m-1-6) [join={node[above,labeled] {\pi}}]; 
    { [start branch=g] \chainin (m-1-7);}
  \chainin (m-2-6) [join={node[right,labeled] {id}}];}
  { [start chain] \chainin (m-2-2);
    \chainin (m-2-3);
    \chainin (m-2-4) [join={node[above,labeled] {i'}}];
    \chainin (m-2-5) [join={node[above,labeled] {\mu'}}];
    \chainin (m-2-6) [join={node[above,labeled] {\pi'}}]; 
    \chainin (m-2-7);}
\end{tikzpicture}
\end{center}
\item they respects the actions in the crossed modules. That is, $r(u\circ_r x)=r(u)\circ_r s(x)$ and $r(x\circ_l u)=s(x)\circ_l r(u)$ for $x\in\n,u\in\m$.
    \end{itemize}
\end{definition}
     It is easy to see that the relation $\hookrightarrow$ is not symmetric for $n\geq 2$. However, it generates an equivalence relation. We define crossed module extensions $E$ and $E'$ to be {\bf equivalent}, denoted by $E\sim E'$, if there exists a chain $E_1,\ldots,E_k$ such that \[E\hookrightarrow E_1\hookleftarrow E_2\hookrightarrow\cdots\hookrightarrow E_k\hookleftarrow E'. \] 
The set of equivalence classes of crossed module extensions of $\g$ by $V$, which induce the given representation of $\g$ on $V$, is denoted by $CExt(\g,V)$.
\begin{remark}
    We remark that if two extensions are elementary equivalent, then they are equivalent. Suppose that $E\hookrightarrow E'$, then there exists a crossed module extension
    \[E_1:\ 0\to V\to\m/ker(r)\xrightarrow{\Tilde{\mu}}\n/ker(s)\to\g\to 0,\]
    where $\m/ker (r)\xrightarrow{\Tilde{\mu}}\n/ker(s)$ is a crossed module with the induced homomorphism $\Tilde{\mu}$ and action. We have $E\sim E'$ bacause $E\hookrightarrow E_1$ through the projections and $E_1\hookleftarrow E'$ through the isomorphisms $\Tilde{r},\Tilde{s}$ induced by $r,s$. 
    
    We check that $ker\Tilde{\mu}=V$ and $coker\Tilde{\mu}=\g$, i.e., $ker\Tilde{\mu}=\Tilde{r}^{-1}ker(\mu')=\Tilde{r}^{-1}V=V$ since $\Tilde{r}$ is an isomorphism with $\Tilde{r}|_V=id_V$, $coker\Tilde{\mu}=(\n/ker(s))/\Tilde{\mu}(\m/ker(r))=(\n/ker(s))/(\mu\m/ker(s))\cong\n/\mu\m=\g$. 
\end{remark}
\section{Classification of crossed modules via $H^3$}
In this section, we proves our main theorem (Theorem \ref{m th}), which says crossed modules of pre-Lie algebras with kernel $V$ and cokernel $\g$ can be classified by $H^3(\g,V)$. As an immediate consequence, we obtain Corollary \ref{cor of m th}, which shows the relation between crossed module extension of pre-Lie algebra and abelian extension of its sub-adjacent Lie algebra.
\begin{lemma}\label{free}
    Let $F$ be a free pre-Lie algebra, $V$ be an $F$-module. Then $H^n(F,V)=0$ for $n\geq 2$.
\end{lemma}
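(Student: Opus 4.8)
The plan is to mimic the classical Lie-algebra argument: realise $H^\bullet(F,V)$ as an $\mathrm{Ext}$-functor over a universal enveloping algebra and then use freeness of $F$ to bound the homological dimension of that algebra. To a pre-Lie algebra $(\g,\circ)$ I would attach the associative algebra $U(\g)$ generated by symbols $L_x$ and $R_x$, linear in $x\in\g$, subject to the two families of relations dictated by the definition of a representation, namely $L_xL_y-L_yL_x=L_{x\circ y-y\circ x}$ and $R_yL_x-L_xR_y=R_yR_x-R_{x\circ y}$. By construction a left $U(\g)$-module is precisely a $\g$-module $(V,\circ_l,\circ_r)$, with $L_x$ and $R_x$ acting as $x\circ_l(-)$ and $(-)\circ_r x$. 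The first thing to verify is that the cochain complex $C^\bullet(\g,V)$ defined above is the one obtained by applying $\Hom_{U(\g)}(-,V)$ to the Koszul-type complex of free modules $P_\bullet$ with $P_0=U(\g)$ and $P_n=U(\g)\otimes\wedge^{n-1}\g\otimes\g$ for $n\ge 1$, the differential being dual to $d$; since $\Hom_{U(\g)}(U(\g)\otimes\wedge^{n-1}\g\otimes\g,V)\cong\Hom(\wedge^{n-1}\g\otimes\g,V)=C^n(\g,V)$, this gives an identification $H^n(\g,V)\cong\mathrm{Ext}^n_{U(\g)}(\Bbbk,V)$, where $\Bbbk$ is the trivial module.

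The exactness of $P_\bullet\to\Bbbk$ for every $\g$ is the Koszulness input for the pre-Lie operad; granting it, the lemma reduces to a single structural statement about the free case: for a free pre-Lie algebra $F$ the enveloping algebra $U(F)$ has global dimension at most one. Indeed, once $U(F)$ has global dimension $\le 1$ we get $\mathrm{Ext}^n_{U(F)}(\Bbbk,V)=0$ for all $n\ge 2$ and all $V$, which is exactly $H^n(F,V)=0$. To prove the dimension bound I would use the universal property and the rooted-tree (grafting) description of $F=F(W)$ to exhibit $U(F)$ as a free associative algebra: after a change of generators adapted to the tree grading, the relations above should organise into a confluent rewriting system whose only relations are the defining ones, leaving no higher syzygies, so that $U(F)$ is a tensor algebra and hence hereditary.

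I expect the genuine difficulty to lie in this last step, proving that $U(F)$ is free associative (equivalently hereditary) for free $F$, since the two defining relations mix $L$'s and $R$'s and the pre-Lie product on $F$ is itself intricate; controlling the normal forms is the crux. A secondary point to get right is the bookkeeping in the $\mathrm{Ext}$ identification, in particular that there is no degree shift (this is why the complex must be augmented by $P_0=U(\g)$ and $\Bbbk$, matching an unwritten $C^0=V$). As a fallback that sidesteps identifying $U(F)$ explicitly, one can try to contract $C^{\ge 2}(F,V)$ directly by means of the Euler (weight) derivation $E$ of the graded algebra $F(W)$ and Cartan's formula $\mathcal L_E=d\iota_E+\iota_E d$, the weight operator being invertible on the positive-weight part that carries the higher cohomology; but the estimates needed to make this work with arbitrary coefficients are heavier than the enveloping-algebra route.
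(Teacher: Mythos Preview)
Your proposal is a coherent strategy, but it is not a proof: the linchpin claim---that $U(F)$ is free associative, or at least hereditary, when $F$ is a free pre-Lie algebra---is left open, and you yourself identify it as ``the crux'' without supplying an argument. The rewriting/normal-form sketch you gesture at would require substantial work (the relations mix $L$'s and $R$'s, and the pre-Lie product on rooted trees is itself intricate), and no reference in the literature hands you this for free. Your fallback via the Euler derivation is likewise only a sketch, and it is not obvious that a contracting homotopy built from the weight grading interacts correctly with an arbitrary $V$-module structure. So as written there is a genuine gap at the decisive step.

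The paper's argument is much shorter and rests entirely on citable results. It invokes Dzhumadil'daev's degree-shifting isomorphism
\[
H^n_{\mathrm{preLie}}(F,V)\cong H^{n-1}_{\mathrm{Lie}}\bigl(F^c,\Hom(F^c,V)\bigr)
\]
to pass to ordinary Lie-algebra cohomology of the sub-adjacent Lie algebra $F^c$, and then uses Chapoton's theorem that $F^c$ is itself a \emph{free} Lie algebra, so that $H^{\ge 2}_{\mathrm{Lie}}(F^c,-)=0$ by the classical fact. This disposes of $n\ge 3$; the remaining case $n=2$ is handled separately by observing that $H^2_{\mathrm{preLie}}(F,V)$ classifies abelian extensions of $F$ by $V$, and these split because $F$ is free. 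If your enveloping-algebra route could be completed it would give a uniform, self-contained homological explanation (and would in particular recover the $n=2$ case without a side argument), but the paper trades that conceptual uniformity for a two-line reduction to known theorems.
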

\begin{proof}
    Denote by $F^c$ the sub-adjacent Lie algebra of $F$. The $F$-module $(V,\circ_l,\circ_r)$ can be viewed as an $F^c$-module via $[X,v]:=X\circ_l v-v\circ_r X$ for $X\in F^c,v\in V$. Then $\Hom(F^c,V)$ is also an $F^c$-module with the representation $\rho:F^c\times\Hom(F^c,V)\to\Hom(F^c,V)$ given by
    $\rho(X)f(Y):=[X,f(Y)]-f([X,Y]_{F^c})$,for $X,Y\in F^c,f\in\Hom(F^c,V)$.
    By Theorem $3.4$ of \cite{D}, we have 
    \[H_{preLie}^{n}(F,V)\cong H_{Lie}^{n-1}(F^c,\Hom(F^c,V)),\qquad n\geq 1.\]
    By Corollary $6.3$ of \cite{Ch}, the sub-adjacent Lie algebra of free pre-Lie algebra is also free, thus $F^c$ is free.
    Now, as a standard result in homological algebras (see \cite{HS}), 
    \[H^n_{Lie}(F^c,\Hom(F^c,V))=0,\qquad n\geq 2.\] Thus $H_{preLie}^{n}(F,V)=0$ for $n\geq 3$. For $n=2$, let $Ext(F,V)$ be the set of equivalence class of abelian extensions of $F$ by $V$, which is zero since $F$ is free. By the fact that $H^2_{preLie}(F,V)\cong Ext(F,V)$, we obtain $H^2_{preLie}(F,V)=0$. The proof is complete.
\end{proof}
\begin{theorem}\label{m th}
   Let $(\g,\circ_{\g})$ be a pre-Lie algebra and let $(V,\circ_l,\circ_r)$ be a representation of $\g$. There is a bijection between $CExt(\g,V)$ and $H^3(\g,V)$.
\end{theorem}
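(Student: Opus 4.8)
The plan is to construct mutually inverse maps $\Phi\colon CExt(\g,V)\to H^3(\g,V)$ and $\Psi\colon H^3(\g,V)\to CExt(\g,V)$. For $\Phi$, start from an extension \eqref{cmext}, choose a linear section $\rho\colon\g\to\n$ of $\pi$, and record the failure of $\rho$ to be multiplicative through $\omega(x,y):=\rho(x)\circ_n\rho(y)-\rho(x\circ_{\g}y)$. Since $\pi\rho=id$ we get $\omega(x,y)\in\ker\pi=\mathrm{im}\,\mu$, so I may pick a further linear lift $\tau\colon\g\otimes\g\to\m$ with $\mu\tau=\omega$. Imitating the $n=2$ coboundary formula, but letting $\g$ act on $\m$ through $\rho$ and the crossed-module action, define $\Theta\colon\wedge^2\g\otimes\g\to\m$ by
\[\Theta(x,y,z):=\rho(x)\circ_l\tau(y,z)-\rho(y)\circ_l\tau(x,z)+\tau(y,x)\circ_r\rho(z)-\tau(x,y)\circ_r\rho(z)-\tau(y,x\circ z)+\tau(x,y\circ z)-\tau([x,y],z).\]
Applying $\mu$ and using $\mu(x\circ_l u)=x\circ_n\mu(u)$ and $\mu(u\circ_r x)=\mu(u)\circ_n x$ turns $\mu\Theta$ into an expression in $\rho$ and $\omega$ that collapses to $0$ by the pre-Lie identity \eqref{defi: pre} in $\n$ (the cubic terms cancel via left-symmetry of the associator, the rest via linearity of $\rho$). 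Hence $\Theta$ factors through $i(V)$ and defines $\theta\in C^3(\g,V)$, and I set $\Phi(E):=[\theta]$. The first real task is to verify $d\theta=0$; here the centrality of $i(V)$ in $\m$ and the identities $\mu(u)\circ_l v=u\circ_m v=u\circ_r\mu(v)$ are what convert the $\n$-action terms back into $\g$-module terms so that the $C^4$-expression vanishes.

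Next I would establish that $[\theta]$ is independent of the auxiliary data and of the representative extension. Replacing $\rho,\tau$ by other choices alters $\tau$ by a lift of an $\mathrm{im}\,\mu$-valued map, and a direct computation shows $\theta$ changes by the coboundary of a $2$-cochain, so the class in $H^3(\g,V)$ is well defined. For invariance under the relation $\hookrightarrow$ of Definition \ref{eq cm ext}, given $E\hookrightarrow E'$ with connecting maps $r,s$ I transport a section and lift for $E$ to $s\rho$ and $r\tau$ for $E'$; since $r,s$ commute with $i,i',\mu,\mu',\pi,\pi'$ and respect the actions, the two cocycles agree. As $\sim$ is generated by $\hookrightarrow$, this makes $\Phi$ descend to $CExt(\g,V)$.

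To build the inverse $\Psi$, Lemma \ref{free} is the essential tool. Given a cocycle $\theta\in Z^3(\g,V)$, choose a free pre-Lie algebra $F$ with a surjection $p\colon F\to\g$ and set $R:=\ker p$, a two-sided ideal, viewing $V$ as an $F$-module through $p$. Then $p^\ast\theta\in Z^3(F,V)$, and since $H^3(F,V)=0$ by Lemma \ref{free}, I may write $p^\ast\theta=d\beta$ for some $\beta\in C^2(F,V)$. I then equip $\m:=R\oplus V$ with the $F$-action and product of the form
\[X\circ_l(r,v):=(X\circ_F r,\ p(X)\circ_l v+\beta(X,r)),\quad (r,v)\circ_r X:=(r\circ_F X,\ v\circ_r p(X)+\beta(r,X)),\quad (r,v)\circ_m(r',v'):=(r\circ_F r',\beta(r,r')),\]
with $\mu(r,v):=r$ and the obvious injection of $V$. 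The equation $d\beta=p^\ast\theta$ is exactly what is needed to check that $\circ_m$ is pre-Lie, that $(\circ_l,\circ_r)$ is an action, and that the crossed-module axioms hold, producing an extension $E_\theta$ of $\g$ by $V$ that induces the given representation; changing the representative $\theta$ by a coboundary or changing $\beta$ by a cocycle only moves $E_\theta$ within its equivalence class, so $\Psi([\theta]):=[E_\theta]$ is well defined.

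Finally I would check $\Phi\Psi=id$ and $\Psi\Phi=id$. For $\Phi\Psi$, running the recipe for $\Phi$ on $E_\theta$ with the section coming from a linear splitting of $p$ recovers $\theta$ on the nose, since the lift $\tau$ can be taken with values dictated by $\beta$. The harder direction, which I expect to be the main obstacle, is $\Psi\Phi=id$: from an arbitrary $E$ I must produce a chain $E\hookrightarrow\cdots\hookleftarrow E_\theta$ to the standard free model. The strategy is to cover $\n$ by a free algebra $F\twoheadrightarrow\n$ so that $F\to\g$ realizes $R$ and $\beta$, form the pullback extension over $F$, and compare it both to $E$ and to $E_\theta$; the asymmetry of $\hookrightarrow$ forces one to assemble a zig-zag rather than a single morphism. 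Throughout, the pre-Lie setting roughly doubles the Lie-theoretic bookkeeping, because every step carries both a left action $\circ_l$ and a right action $\circ_r$ in addition to the products $\circ_m,\circ_n$; keeping these straight in the cocycle identity and the crossed-module axioms is the principal technical difficulty, while Lemma \ref{free} removes the only genuinely non-formal obstruction by guaranteeing that the primitive $\beta$ exists.
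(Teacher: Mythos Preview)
Your proposal is correct and follows essentially the same approach as the paper: your maps $\Phi,\Psi$ are the paper's $T,S$, the cocycle $\Theta$ is the paper's $\theta$ verbatim, the free pre-Lie resolution together with Lemma~\ref{free} plays the same role, and the twisted sum $\m=R\oplus V$ with its $\beta$-deformed product and $F$-action coincides with the paper's construction in \eqref{pre lie m}--\eqref{tal action}. The only notable difference is in your sketch of $\Psi\Phi=\mathrm{id}$: rather than forming a pullback and a zig-zag, the paper exploits freeness of $F$ directly to extend the section $\rho\colon\g\to\n$ to a pre-Lie homomorphism $G\colon F\to\n$, then writes down a single elementary equivalence $E_\theta\hookrightarrow E$ via $H(Y+v)=\sigma G(Y)+i(v)$ (Lemma~\ref{3}); also, in the other direction $\Phi\Psi$ the cocycle is recovered only up to the explicit coboundary $d(s^*\tilde\alpha)$ rather than ``on the nose'' (Lemma~\ref{4}).
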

\begin{proof}
   Since the proof is long, we only make precise here how to construct the bijection and its inverse, and details will be left to Lemma \ref{1} to \ref{4}.
   
   $\bullet$ We construct $T:CExt(\g,V)\to H^3(\g,V)$ as follows. Given a crossed module extension which induces the given representation of $\g$ on $V$:
\begin{equation}\label{ext}
    0\to V\xrightarrow{i}\m\xrightarrow{\mu}\n\xrightarrow{\pi}\g\to 0,
\end{equation}
where the action of $\n$ on $\m$ is denoted by $(\Bar{\circ}_l,\Bar{\circ}_r)$.
The first step is to take a linear section $\rho$ of $\pi$ such that $\pi\rho=id$, and to calculate the default of $\rho$ to be a pre-Lie homomorphism. That is,  $\alpha(x,y):=\rho(x)\circ_{\n}\rho(y)-\rho(x\circ_{\g} y)$ for $x,y\in\g$.
 Since $\pi$ is a pre-Lie homomorphism, we have $\pi\alpha(x,y)=0$. By the exactness of sequence \eqref{ext}, there exists $\beta\in \Hom(\g\otimes\g,\m)$ such that $\mu\beta=\alpha$.
 Choosing a linear section $\sigma$ of $\mu$ such that $\mu\sigma=id$, we can take $\beta$ as $\sigma\alpha$.
 
Next, we define $\theta\in C^3(\g,\m)$ by \begin{align}\label{theta1}
            \theta (x,y,z)
            &=\rho(x)\bcl\beta(y,z)-\rho(y)\bcl\beta(x,z)+\beta(y,x)\bcr \rho(z)-\beta(x,y)\bcr \rho(z)\nonumber\\
            &\ \ -\beta(y,x\circ_{\g} z)+\beta(x,y\circ_{\g} z)
            -\beta([x,y],z),           
        \end{align}
        where $[x,y]=x\circ_{\g} y-y\circ_{\g} x$. 
        Despite $\theta$ has the form of $d\beta$, we can not say $\theta=d\beta$ because the map $\g\otimes\m\to\m,(x,m)\mapsto\rho(x)\circ m$ is not a representation in general. But in case $\beta\in \Hom(\g^{\otimes 2},i(V))$, this is just the given representation of $\g$ on $V$.
    
Finally, by the crossed module properties and $\alpha=\mu\beta$, we compute $\mu\theta=0$, thus $\theta\in C^3(\g,V)$.
It is straightforward to check that $\theta$ is a $3$-cocycle, thus we can define $T:CExt(\g,V)\to H^3(\g,V)$ by mapping the equivalence classs of extension \eqref{ext} to $[\theta]$. 

$\bullet$ Conversely, the inverse map $S:H^3(\g,V)\to CExt(\g,V)$ is constructed by the following procedures. 
First, let $F$ be the free pre-Lie algebra generated by $\g$ (see \cite{D}), then we get an exact sequence:
\begin{equation}\label{eq 3}
    0\to K\xrightarrow{i} F\xrightarrow{\pi} \g\to 0,
\end{equation}
where $\pi$ is the projection, $K=ker(\pi)$ and $i$ is the inclusion. 

Second, $\pi$ induces a map $\pi^*:C^n(\g,V)\to C^n(F,V)(n\geq 1)$ by $f\mapsto f\pi$. Regarding the $\g$-module $V$ as an $F$-module and also trivial $K$-module by $X\circ u:=\pi(X)\circ u$ for $X\in F, u\in V$, then $\pi^*$ is a cochain map of complexes. That is, the following diagram commutes: 
\begin{equation}
\begin{array}{ccccccccc}
\cdots&\longrightarrow& C^2(\g,V)&\stackrel{d}\longrightarrow&C^3(\g,V)&\stackrel{d}\longrightarrow&C^4(\g,V)&\longrightarrow&\cdots\\
 &            &\pi^*\Big\downarrow&       &\pi^*\Big\downarrow&          &\pi^*\Big\downarrow& &\\
 \cdots&\longrightarrow&C^2(F,V)&\stackrel{d}\longrightarrow&C^3(F,V)&\stackrel{d}\longrightarrow&C^4(F,V)&\longrightarrow&\cdots.
 \end{array}\end{equation}
Likewise, the inclusion $i:K\to F$ also induces a cochain map $i^*:C^n(F,V)\to C^n(K,V)$.
For any $3$-cocycle $\theta\in C^3(\g,V)$, $\pi^*\theta$ is a $3$-cocycle because $d\pi^*\theta=\pi^*d\theta=0$. By Lemma \ref{free}, $H^n(F,V)=0\ (n>1)$ for free pre-Lie algebra $F$, 
there exists an $\Tilde{\alpha}\in C^2(F,V)$ such that $d\Tilde{\alpha}=\pi^*\theta$.
We thus get a map $\alpha:=i^*\tal\in C^2(K,V)$, which is a $2$-cocycle because $d\alpha=di^*\tal=i^*d\tal=i^*\pi^*\theta=0$. 

Now, let $\m=K\oplus V$ as vector space and define an operation $\circ_{\m}:\m\otimes\m\to\m$ by 
\begin{equation}\label{pre lie m}
    (X+u)\circ (Y+v):=X\circ_F Y+\alpha(X,Y),
\end{equation} for $X,Y\in K, u,v\in V$. Since $\alpha$ is a $2$-cocycle, $(\m,\circ_{\m})$ is a pre-Lie algebra. We thus get an exact sequence of pre-Lie algebras:
\begin{equation}\label{eq 4}
    0\to V\xrightarrow{i} \m\xrightarrow{p} K\to 0,
\end{equation}
where $i$ is the inclusion and $p$ is the projection.

Finally, splicing \eqref{eq 4} and \eqref{eq 3} together, we get a crossed module extension:
\begin{equation}\label{eq 5}
    0\to V\xrightarrow{i}\m\xrightarrow{\mu}F\xrightarrow{\pi}\g\to 0,
\end{equation}
where $\mu(X+u)=X,X+u\in K\oplus V=m$. The action $(\bcl,\bcr)$ of $F$ on $\m$ is given by
\begin{eqnarray}\label{tal action}
    X\circ (Y+v) &:=X\circ_F Y+\tal(X,Y)+X\circ_l v,\nonumber\\
    (Y+v)\circ X &:=Y\circ_F X+\tal(Y,X)+v\circ_r X,
\end{eqnarray}
for $X\in F,\ Y+v\in K\oplus V=\m$. This is an action because $d\tal(X,Y,Z)=\pi^*\theta(X,Y,Z)=\theta(\pi X,\pi Y,\pi Z)=0$ when one variable is in $K=ker\pi$, and $K$ acts trivially on $V$. We define the map $S: H^3(\g,V)\to CExt(\g,V)$ by mapping $[\theta]$ to equivalence class of crossed module extension \eqref{eq 5}.

$\bullet$ Lemma \ref{1} to \ref{4} below will show that $T$ and $S$ are well-defined and inverse to each other. Therefore, $T$ is a bijection and the proof is complete . 
\end{proof}

\begin{lemma}\label{1}
    The above-mentioned $T: CExt(\g,V)\to H^3(\g,V)$ is well-defined.
\end{lemma}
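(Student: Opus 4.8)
The plan is to verify three things, after which well-definedness of $T$ is immediate: (i) for any admissible choices the cochain $\theta$ is a $V$-valued $3$-cocycle, so that $[\theta]\in H^3(\g,V)$ is defined; (ii) the class $[\theta]$ does not depend on the auxiliary choices, namely the section $\rho$ of $\pi$ and the element $\beta$ with $\mu\beta=\alpha$; and (iii) $[\theta]$ depends only on the equivalence class of the extension \eqref{ext}.

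For (i), I would first apply $\mu$ to each term of \eqref{theta1}. Using the crossed module identities $\mu(a\bcl u)=a\circ_{\n}\mu(u)$ and $\mu(u\bcr a)=\mu(u)\circ_{\n}a$ together with $\mu\beta=\alpha$, the expression $\mu\theta$ becomes the same combination built from $\rho$ and $\alpha$, into which I substitute $\alpha(a,b)=\rho(a)\circ_{\n}\rho(b)-\rho(a\circ_{\g}b)$; the left-symmetry \eqref{defi: pre} of $\circ_{\n}$ then makes the expression vanish, giving $\mu\theta=0$, hence $\theta\in C^3(\g,V)$. For the cocycle condition I would expand $d\theta$ using the genuine representation $(\circ_l,\circ_r)$ of $\g$ on $V$ --- legitimate now that $\theta$ is $V$-valued --- and check the terms cancel; this is the routine $n=3$ computation.

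Statement (ii) is the crux. Since $\theta$ is linear in $\beta$ for fixed $\rho$, replacing $\beta$ by $\beta+\gamma$ with $\gamma\in\Hom(\g^{\otimes2},i(V))$ changes $\theta$ by the cochain obtained from \eqref{theta1} with $\gamma$ in place of $\beta$; because $(a,u)\mapsto a\bcl u$ and $(u,a)\mapsto u\bcr a$ restrict on $i(V)$ to the given representation (as noted after \eqref{theta1}), this extra cochain is exactly $d\gamma$, a coboundary. Changing $\rho$ is more delicate. As $\rho'-\rho$ is valued in $ker\pi=im\mu$, write $\rho'=\rho+\mu\lambda$ with $\lambda:\g\to\m$ linear; the crossed module relations then give $\alpha'=\alpha+\mu\beta_0$ with
\[\beta_0(x,y)=\rho(x)\bcl\lambda(y)+\lambda(x)\bcr\rho(y)+\lambda(x)\circ_{\m}\lambda(y)-\lambda(x\circ_{\g}y),\]
so I may take $\beta'=\beta+\beta_0$. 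The claim to establish is that the resulting $\theta'$ satisfies $\theta'-\theta=d\xi$ for an explicit $\xi\in C^2(\g,V)$; proving it means expanding $\theta'$ from \eqref{theta1} with $\rho',\beta'$, subtracting $\theta$, and using the crossed module axioms and the action/representation identities repeatedly to reorganize the difference as $d\xi$. I expect this bookkeeping to be the main obstacle of the whole lemma.

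For (iii), I would use that $\sim$ is generated by elementary equivalence, so it suffices to treat $E\hookrightarrow E'$ with homomorphisms $r,s$ as in Definition \ref{eq cm ext}; since equality of cohomology classes is symmetric, the non-symmetry of $\hookrightarrow$ is harmless. Take $\rho'=s\rho$, a section of $\pi'$ because $\pi's=\pi$; then $\alpha'=s\alpha$ as $s$ is a homomorphism, and $\beta'=r\beta$ satisfies $\mu'\beta'=s\mu\beta=\alpha'$ by commutativity of the diagram. Because $r,s$ are pre-Lie homomorphisms with $r(a\bcl u)=s(a)\bcl r(u)$, $r(u\bcr a)=r(u)\bcr s(a)$ and $ri=i'$, applying $r$ to \eqref{theta1} gives $\theta'=r\theta=\theta$ under the identification of $V$ with $i(V)$ and $i'(V)$. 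Hence $E$ and $E'$ yield the same class, and together with (i) and (ii) this shows $T$ is well defined.
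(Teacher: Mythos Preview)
Your proposal is correct and follows the same three-step structure as the paper. Part (i) matches the paper's outline; part (ii) for the change of $\beta$ is exactly the paper's first bullet, and for the change of $\rho$ you set up the same computation the paper carries out (your $\beta_0$ is the paper's $b$ up to sign, via $h=-\lambda$), rightly flagging it as the main bookkeeping.

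Your treatment of (iii) is slightly cleaner than the paper's. The paper keeps independent sections $\sigma,\sigma'$ of $\mu,\mu'$, sets $\beta=\sigma\alpha$ and $\beta'=\sigma'\alpha'$, and then shows $\theta-\theta'=d\phi$ with $\phi(x,y)=(r\sigma-\sigma's)\alpha(x,y)$. You instead exploit the $\beta$-independence already established in (ii) and take $\beta'=r\beta$; this is a legitimate lift since $\mu'(r\beta)=s\mu\beta=s\alpha=\alpha'$, and then the action-compatibility of $(r,s)$ together with $r|_{i(V)}=i'\circ i^{-1}$ give $\theta'=r\theta=\theta$ on the nose. This buys you a shorter argument at no cost; the paper's version has the minor advantage of exhibiting the coboundary explicitly without appealing to the earlier $\beta$-independence step.
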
 
\begin{proof}
     We follow the notations in the proof of Theorem \ref{m th}.
     We shall check that $[\theta]$ is independent of the choice of sections $\sigma$ and $\rho$, and that equivalent extensions lead to $3$-cocycles in the same cohomology class.
     
     $\bullet$ For two sections $\sigma$ and $\sigma'$ of $\mu$, we have $\mu(\sigma-\sigma')=0$, which implies $\sigma-\sigma'\in\Hom(\n,V)$ and thus $\beta-\beta'=(\sigma-\sigma')\alpha\in \Hom(\g\otimes\g,V)$. So $\theta-\theta'=d^2(\beta-\beta')$ and we have $[\theta-\theta']=0$.
         
      $\bullet$ To check that $[\theta]$ is independent of the choice of $\rho$, let $\rho$ and $\rho'$ be two sections of $\pi$, then they induce $\beta,\beta'\in \Hom(\g\otimes\g,\m)$, and thus $\theta,\theta'\in C^3(\g,V)$. Since $\pi(\rho-\rho')=0$ and \eqref{ext} is exact, there exists $h\in \Hom(\g\otimes\g,\m)$ such that $\rho-\rho'=\mu h$.
      To use this relation, we would like to express $\theta-\theta'$ in terms of $\rho-\rho'$. 
      Explicitly, since 
        \begin{align*}
            \rho(x)\bcl\beta(y,z)-\rho'(x)\bcl\beta'(y,z)
            &=(\rho-\rho')(x)\bcl\beta(y,z)+\rho'(x)\bcl(\beta-\beta')(y,z)\\
            &=\mu h(x)\bcl\sigma\alpha(y,z)+\rho'(x)\bcl(\beta-\beta')(y,z)\\ &=h(x)\bcr\mu\sigma\alpha(y,z)+\rho'(x)\bcl(\beta-\beta')(y,z)\\       &=h(x)\bcr\alpha(y,z)+\rho'(x)\bcl(\beta-\beta')(y,z)\\
            &=h(x)\bcr(\rho(y)\circ_{\n}\rho(z)-\rho(y\circ_{\g} z))\\
            &\ \ +\rho'(x)\bcl(\beta-\beta')(y,z),
        \end{align*}
        and similarly
        \begin{align*}
            \beta(y,x)\bcr\rho(z)-\beta'(y,x)\bcr\rho'(z)&=(\rho(y)\circ_{\n}\rho(x)-\rho(y\circ_{\g} x))\circ h(z)\\
            &\ \ +(\beta-\beta')(y,x)\bcr\rho'(z),
        \end{align*}
        we obtain
        \begin{align}\label{theta}
            (\theta-\theta')(x,y,z)&=\rho(x)\bcl\beta(y,z)-\rho'(x)\bcl\beta'(y,z)\nonumber\\
            &\ \ +\beta(y,x)\bcr\rho(z)-\beta'(y,x)\bcr \rho'(z)\nonumber\\
            &\ \ -\rho(y)\bcl\beta(x,z)+\rho'(y)\bcl\beta'(x,z)\nonumber\\
            &\ \ -\beta(x,y)\bcr \rho(z)+\beta'(x,y)\bcr \rho'(z)\nonumber\\
            &\ \ -\beta(y,x\circ_{\g} z)+\beta(x,y\circ_{\g} z)
            -\beta([x,y],z)\nonumber\\
            &\ \ +\beta'(y,x\circ_{\g} z)-\beta'(x,y\circ_{\g} z)
            +\beta'([x,y],z)\nonumber\\
           &=\{h(x)\bcr(\rho(y)\circ_{\n}\rho(z)-\rho(y\circ_{\g} z))+\rho'(x)\bcl(\beta-\beta')(y,z)\nonumber\nonumber\\
            &\ \ +(\rho(y)\circ_{\n}\rho(x)-\rho(y\circ_{\g} x))\bcl h(z)+(\beta-\beta')(y,x)\bcr\rho'(z) \nonumber\\
            &\ \ -(\beta-\beta')(y,x\circ_{\g} z)-c.p.(x,y)\}-(\beta-\beta')([x,y],z),
         \end{align}
         for $x,y,z\in\g$.
         Define $b\in\Hom(\g\otimes\g,\m)$ by $b(x,y):=\rho(x)\bcl h(y)-h(x)\bcr\mu h(y)+h(x)\bcr\rho(y)-h(x\circ_{\g} y)$ such that $\mu b=\mu(\beta-\beta')$, then $\beta-\beta'-b\in\Hom(\g\otimes\g,V)$. So 
         \begin{align*}
             \eqref{theta}&=\{h(x)\bcr(\rho(y)\circ_{\n}\rho(z)-\rho(y\circ_{\g} z))+\rho'(x)\bcl b(y,z)\\
            &\ \ +(\rho(y)\circ_{\n}\rho(x)-\rho(y\circ_{\g} x))\bcl h(z)+b(y,x)\bcr\rho'(z)-b(y,x\circ_{\g} z)-c.p.(x,y)\}\\
            &\ \ -b([x,y],z)+d(\beta-\beta'-b),
         \end{align*}
         which means that replacing $\beta-\beta'$ by $b$ would not change the cohomology class of $\theta-\theta'$. This replacement yields $[\theta-\theta']=[0]$ (this process requires comprehensive application of the definitions of the pre-Lie algebra $\g$, the action of $\n$ on $\m$, and the crossed module properties).

         $\bullet$ It remains to show that equivalent extensions lead to $3$-cocycles in the same cohomology class, which is sufficient to show for elementary equivalent extensions. Given elementary equivalent crossed module extensions $E\hookrightarrow E'$ (see definition \ref{eq cm ext}), take sections $\rho,\rho'$ of $\pi,\pi'$ and sections $\sigma,\sigma'$ of $\mu,\mu'$, respectively. We would like to prove $[\theta-\theta']=0$. Since $[\theta']$ is independent of the choice of section, we can take $\rho'=s\rho$, then for $x,y,z\in\g$,
         \begin{align}\label{theta-theta'}
             (\theta-\theta')(x,y,z)&=(r\theta-\theta')(x,y,z)\nonumber\\
             &=\{s\rho(x)\bcl(r\sigma-\sigma's)(\rho(y)\circ_{\n}\rho(z)-\rho(y\circ_{\g} z))\nonumber\\
             &\ \ +(r\sigma-\sigma's)(\rho(y)\circ_{\n}\rho(x)-\rho(y\circ_{\g} x))\bcr s\rho(z)\nonumber\\
             &\ \ -(r\sigma-\sigma's)(\rho(y)\circ_{\n}\rho(x\circ_{\g} z)-\rho(y\circ_{\g} (x\circ_{\g} z)))-c.p.(x,y)\}\nonumber\\
             &\ \ -(r\sigma-\sigma's)(\rho([x,y])\circ_{\n}\rho(z)-\rho([x,y]\circ_{\g} z)).
         \end{align}
         The first equation holds because ${r|}_V=id_V$; the second one is guaranteed by the fact that $r,s$ respect actions of crossed modules. For example, the first term in the second equation is obtained as follows:
         \begin{align*}
             r(\rho(x)\bcl\beta(y,z))-\rho'(x)\bcl\beta(y,z)
             &=s\rho(x)\bcl r(\sigma(\rho(y)\circ_{\n}\rho(z)-\rho(y\circ_{\g}z)))\\
             &\ \ \ \ -s\rho(x)\bcl\sigma'(s\rho(y)\circ_{\n}s\rho(z)-s\rho(y\circ_{\g}z))\\
             &=s\rho(x)\bcl(r\sigma-\sigma's)(\rho(y)\circ_{\n}\rho(z)-\rho(y\circ_{\g} z)).
         \end{align*}

         Now, define $\phi\in\Hom(\g\otimes\g,\m)$ by $\phi(x,y):=(r\sigma-\sigma's)(\rho(x)\circ_{\n}\rho(y)-\rho(x\circ_{\g} y))$, then direct computation shows that $\mu'\phi=0$, hence $\phi\in\Hom(\g\otimes\g,V)$. It follows from \eqref{theta-theta'} that $\theta-\theta'=d\phi$, hence
         $[\theta-\theta']=0$, which completes the proof. 
\end{proof}

\begin{lemma}\label{2}
    The map $S: H^3(\g,V)\to CExt(\g,V)$ constructed in the proof of Theorem \ref{m th} is well-defined.
\end{lemma}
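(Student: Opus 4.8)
The plan is to control the two choices entering the construction of $S([\theta])$: the lift $\tal\in C^2(F,V)$ with $d\tal=\pi^*\theta$, and the cocycle representative $\theta$ of the class $[\theta]$. (The free pre-Lie algebra $F$ on $\g$ is canonical, so it contributes no ambiguity.) I would show that changing either choice replaces the extension \eqref{eq 5} by an equivalent one.

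First, independence of the representative. If $\theta'=\theta+d\psi$ with $\psi\in C^2(\g,V)$, then since $\pi^*$ is a cochain map, $\pi^*\theta'=\pi^*\theta+d(\pi^*\psi)$, so I may take $\tal':=\tal+\pi^*\psi$ as a lift for $\theta'$. Two simplifications occur for free: $\alpha'=i^*\tal'=\alpha+i^*\pi^*\psi=\alpha$ because $\pi i=0$, so the pre-Lie algebra $\m=K\oplus V$ is literally unchanged; and in the action \eqref{tal action} the extra contribution $\pi^*\psi(X,Y)=\psi(\pi X,\pi Y)$ always has one argument in $K=\ker\pi$, hence vanishes. Thus this choice of $\tal'$ reproduces the very same extension, reducing everything to the independence of the lift.

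Second, independence of the lift. Let $\tal_1,\tal_2$ both lift $\pi^*\theta$, producing pre-Lie algebras $\m_j=K\oplus V$ with products twisted by $\alpha_j=i^*\tal_j$ and actions $\bcl^{\,j},\bcr^{\,j}$. Their difference is a $2$-cocycle of $F$, so by Lemma \ref{free} (which gives $H^2(F,V)=0$) there is $\gamma\in\Hom(F,V)$ with $\tal_1-\tal_2=d\gamma$. I would set $s:=\mathrm{id}_F$ and define $\Phi\colon\m_2\to\m_1$ by $\Phi(X+u):=X+u-\gamma(X)$ for $X\in K,\,u\in V$ (using $\gamma|_K$), and claim that $(\Phi,\mathrm{id}_F)$ is an elementary equivalence $E_2\hookrightarrow E_1$ in the sense of Definition \ref{eq cm ext}; this yields $E_1\sim E_2$.

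The remaining work is to verify that $\Phi$ is a pre-Lie isomorphism making the ladder commute and respecting both actions. All the required identities reduce to the single expansion
\[(d\gamma)(X,Y)=X\circ_l\gamma(Y)+\gamma(X)\circ_r Y-\gamma(X\circ_F Y),\]
together with the observation that whenever an argument lies in $K$ the module terms $\pi(\cdot)\circ_l$ and $\circ_r\pi(\cdot)$ vanish: on $K\times K$ this collapses $d\gamma$ to $-\gamma(X\circ_F Y)$, matching the coboundary $\alpha_1-\alpha_2$ that controls the two products, while on $F\times K$ and $K\times F$ the surviving terms match precisely the difference of the two actions. I expect the main obstacle to be purely this bookkeeping — tracking which arguments are killed by $\pi$ and fixing the sign in the shift by $\gamma$ — rather than anything conceptual, the computation being the pre-Lie analogue of the classical fact that cohomologous $2$-cocycles give isomorphic abelian extensions. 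Granting these routine checks, $E_1\sim E_2$, and combined with the first step this proves that $S$ depends only on $[\theta]$, i.e.\ $S$ is well-defined.
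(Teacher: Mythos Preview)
Your proof is correct and follows essentially the same approach as the paper: both handle the two choices (the representative $\theta$ and the lift $\tal$) separately, using $\pi i=0$ to kill $i^*\pi^*\psi$ for the first, and $H^2(F,V)=0$ together with a shift-by-$\gamma$ isomorphism of $K\oplus V$ for the second. The paper treats the two steps in the opposite order and is terser about the action compatibility (which you verify more carefully via the collapse of $d\gamma$ when an argument lies in $K$), but the substance of the argument is identical.
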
 
\begin{proof}
We claim that $S$ does not depend on the choice of $\tal$ and that $3$-cocycles in the same cohomology class lead to equivalent crossed module extensions. 

Indeed, suppose that there is another $\tal'\in C^2(F,V)$ such that $d\tal'=\pi^*\theta$, then we get a crossed module extension through it, see \eqref{pre lie m} and \eqref{tal action}. We will show that this crossed module extension is equivalent to the one defined by $\tal$. By $d(\tal-\tal')=0$ and $H^2(F,V)=0$, there exists $k\in C^1(F,V)$ such that $\tal-\tal'=dk$. It follows that $\alpha-\alpha'=i^*(\tal-\tal')=i^*dk=di^*k$. Define $\phi:\m\to\m$ by $\phi(Y+v)=Y+v+i^*k(Y)$ for $Y\in K,v\in V$. Then it is an isomorphism of pre-Lie algebras inducing the identities on $V$ and $K$, and so the (elementary) equivalence of crossed module extension \eqref{eq 5}. Therefore, $S$ is independent of the choice of $\tal$.

Suppose there are $3$-cocyles $\xi',\xi$ such that $[\xi']=[\xi]$. Then $\xi'-\xi=dl$ for some $l\in C^2(\g,V)$. Hence $\pi^*\xi'=\pi^*\xi+\pi^*dl=d\tal+d\pi^*l=d(\tal+\pi^*l)$. Since $S$ (or say the equivalence class of the resulting crossed module extension) is independent of the choice of $\tal'$ that satisfys $d\tal'=\pi^*\xi'$, we can choose $\tal'=\tal+\pi^*l$. This forces $\alpha'=i^*\tal'=i^*\tal+i^*\pi^*l=\alpha$, thus they correspond to the same extension \eqref{eq 4}. Now it is obvious that identity gives the (elementary) equivalence of crossed module extensions \eqref{eq 5} and we conclude that $S$ is well-defined.
\end{proof}

\begin{lemma}\label{3}
$ST=1_{CExt}$. 
\end{lemma}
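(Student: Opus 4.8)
The plan is to prove $S(T[E]) = [E]$ for every crossed module extension $E$ by producing an explicit elementary equivalence from the model extension $S([\theta])$ back to $E$. I keep the notation of the proof of Theorem \ref{m th}: write $E$ for the given extension \eqref{ext}, with maps $\mu,\pi$, action $(\bcl,\bcr)$, chosen sections $\rho$ of $\pi$ and $\sigma$ of $\mu$, and obstruction $\theta$ with $[\theta]=T[E]$; and write $S([\theta])$ for the model \eqref{eq 5}, with free pre-Lie algebra $F$ on $\g$, kernel $K=\ker\pi_F$, middle term $\m'=K\oplus V$, defining cochain $\tal$ (with $d\tal=\pi_F^*\theta$), $\alpha_K:=i^*\tal$, projection $\pi_F\colon F\to\g$, and action given by \eqref{tal action}.

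First I build the comparison on the outer columns. Since $F$ is free on $\g$, its universal property produces a pre-Lie homomorphism $\Phi\colon F\to\n$ extending the section $\rho$, i.e. agreeing with $\rho$ on the generators $\g$. Then $\pi\Phi$ and $\pi_F$ are pre-Lie homomorphisms $F\to\g$ agreeing on generators, so $\pi\Phi=\pi_F$; hence $\Phi(K)\subseteq\ker\pi=\operatorname{im}\mu$. Using $\sigma$, define $\psi\colon\m'=K\oplus V\to\m$ by $\psi(X+u):=\sigma\Phi(X)+i(u)$ for $X\in K,\ u\in V$. Then $\mu\psi=\Phi\mu'$ and $\psi\,i_{\m'}=i_\m$, so the ladder with vertical maps $\mathrm{id}_V,\psi,\Phi,\mathrm{id}_\g$ commutes; it remains only to check that $\psi$ is a homomorphism respecting the actions, which yields $S([\theta])\hookrightarrow E$ and hence $S(T[E])=[E]$.

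Next I reduce this to a cochain identity. Because $i(V)$ is central in $\m$ (Remark \ref{re mo}), the defect of the linear map $\sigma\Phi|_K$ lands in $V$: write $i\,\gamma(X,Y):=\sigma\Phi(X)\circ_\m\sigma\Phi(Y)-\sigma\Phi(X\circ_F Y)$ for $X,Y\in K$, and for $Z\in F$, $X\in K$ write $i\,\delta_l(Z,X):=\Phi(Z)\bcl\sigma\Phi(X)-\sigma\Phi(Z\circ_F X)$, with $\delta_r$ defined symmetrically. The crossed-module identities $\mu(m)\bcl n=m\circ_\m n=m\bcr\mu(n)$ force $\delta_l=\gamma=\delta_r$ on $K\otimes K$, so $\gamma,\delta_l,\delta_r$ assemble into a single $V$-valued cochain on $(F\otimes K)+(K\otimes F)$. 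Unwinding \eqref{pre lie m} and \eqref{tal action}, one checks that $\psi$ is an action-preserving homomorphism precisely when $\tal$ restricts on $(F\otimes K)+(K\otimes F)$ to this defect cochain (in particular $\alpha_K=\gamma$). Since, by Lemma \ref{2}, $S([\theta])$ is independent of the primitive of $\pi_F^*\theta$ chosen, it suffices to produce a primitive $\tal$ with this prescribed restriction.

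The existence of such a primitive is the main obstacle. I would establish it by showing that the defect cochain extends to a cochain $\tilde\gamma\in C^2(F,V)$ on all of $F\otimes F$ satisfying $d\tilde\gamma=\pi_F^*\theta$; conceptually, $\tilde\gamma$ records the central extension of $K$ by $V$ underlying the pulled-back extension $\m\times_\n F$, which is elementary equivalent to $E$ via $(\mathrm{pr}_\m,\Phi)$. The hard point is the verification $d\tilde\gamma=\pi_F^*\theta$: this is exactly the assertion that the obstruction attached to $\m\times_\n F$ reproduces $\theta$, and it is where the explicit formula \eqref{theta1}, the relation $\alpha=\mu\beta$, and the defining identities of the action $(\bcl,\bcr)$ all enter, so it is the computational heart of the lemma. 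Granting it, Lemma \ref{2} permits taking $\tilde\gamma$ itself as the defining cochain $\tal$ of $S([\theta])$ — any primitive gives the same equivalence class, primitives existing because $H^2(F,V)=0$ (Lemma \ref{free}). With this choice $\tal$ restricts on $(F\otimes K)+(K\otimes F)$ to the defect cochain, so $\psi$ is an action-preserving homomorphism of pre-Lie algebras. Therefore $S([\theta])\hookrightarrow E$, and hence $S(T[E])=[E]$, that is $ST=1_{CExt}$.
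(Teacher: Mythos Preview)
Your proposal is correct and takes essentially the same route as the paper: both extend $\rho$ to a pre-Lie homomorphism $G=\Phi\colon F\to\n$ via freeness, define $H=\psi\colon K\oplus V\to\m$ by $\sigma\Phi+i$, and then argue that a suitable primitive $\tal$ of $\pi_F^*\theta$ can be chosen so that $(\psi,\Phi)$ is an elementary equivalence. The only difference is packaging: the paper writes its $\tal$ down explicitly using a linear splitting $F=\g\oplus K$ and the defect function $f(X,Y)=\sigma(GX)\circ_\m\sigma(GY)-\sigma(GX\circ_\n GY)$, which is precisely your $\tilde\gamma$ described conceptually via the pullback $\m\times_\n F$; both presentations leave the verification $d\tal=\pi_F^*\theta$ at the same level of detail.
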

\begin{proof}
$\bullet$ We first recall briefly how to construct $T:CExt\to H^3$ and $S:H^3\to CExt$.
Given a crossed module extension:
\begin{equation}\label{cm 1}
    0\to V\xrightarrow{i}\m\xrightarrow{\mu}\n\xrightarrow{p}\g\to 0,
\end{equation}
where the action of $\n$ on $\m$ is denoted by $(\bcl,\bcr)$, 
we can choose sections $\rho$ of $p$ and $\sigma$ of $\mu$ to define a $3$-cocycle $\theta$ (see \eqref{theta1}). Then $T$ maps the equivalence class of \eqref{cm 1} to $[\theta]$. Let $F$ be the free pre-Lie algebra generated by $\g$ and $\pi: F\to\g$ be the projection. For this $3$-cocycle $\theta$, there exists $\tal_0\in C^2(F,V)$ such that $d\tal_0=\pi^*\theta$. By this $\tal_0$ we can construct a crossed module extension and $S$ maps $[\theta]$ to equivalence class of it.
Our goal is to show that the last crossed module extension is equivalent to the original one \eqref{cm 1}.  

\item Observing that $d\tal_0=\pi^*\theta$ only depends on its values on $\pi(F)$, and that $\pi s\pi=\pi$ for any section $s$ of $\pi$, we have
\begin{equation}\label{tal on g}
    d\tal_0(X,Y,Z)=d\tal_0(s\pi X,s\pi Y,s\pi Z),\qquad X,Y,Z\in F.
\end{equation}
Write $F=\g\oplus ker\pi$ and choose section $s$ to be the inclusion, i.e., $sx=(x,0)$ for $x\in\g$. Then by \eqref{tal on g}, 
$\pi^*\theta(x+X,y+Y,z+Z)=d\tal_0(sx,sy,sz)$ for $x+X,y+Y,z+Z\in\g\oplus ker\pi$, so $\pi^*\theta$ is completely depended on the restriction of $\tal_0$ on ${(\g,0)\otimes(\g,0)}$. Since $S$ is indenpendent of the choice of $\tal$ that satisfys $d\tal=\pi^*\theta$, we can adjust the values of $\tal_0$ in other parts, like $(0,ker\pi)\otimes(0,ker\pi)$, and define a new $\tal$ which is easier to work with.

Indeed, since $F$ is free on $\g$, the section $\rho:\g\to\n$ uniquely induces a pre-Lie homomorphism $G:F\to \n$ such that $G(x,0)=\rho x$ for $x\in\g$. We can define $\tal\in C^2(F,V)$ by
        \begin{align*}
\tal(x+X,y+Y)&:=\tal_0(x,y)+f(x,Y)+f(X,y)+f(X,Y),
        \end{align*}
        where $if(X,Y):=\sigma(GX)\circ\sigma(GY)-\sigma(GX\circ GY)$ for $X,Y\in F$.
        Now, we get a crossed module extensions as in the proof of Theorem \ref{m th}:
        \begin{equation}\label{cm 2}
            0\to V\to ker\pi\oplus_{\tal} V\xrightarrow{\mu_{\tal}}F\xrightarrow{\pi}\g\to 0.
        \end{equation}
        The pre-Lie structure on $ker\pi\oplus V$ is given by \eqref{pre lie m}, and the action of $F$ on $ker\pi\oplus V$, denoted also by $(\bcl,\bcr)$, is given by \eqref{tal action}.

        $\bullet$ To show that crossed module \eqref{cm 2} is equivalent to \eqref{cm 1}, we need to define $H:ker\pi\oplus_{\tal}V\to\m$ appropriately to give the (elementary) equivalence:
        \begin{equation}
\begin{array}{ccccccccccc}
0&\longrightarrow& V&\longrightarrow&ker\pi\oplus_{\tal}V&\stackrel{\mu_{\tal}}\longrightarrow&F&\stackrel{\pi}\longrightarrow&\g&\longrightarrow&0
\\
 &            &\Big\|&       &{\scriptstyle H}\Big\downarrow&          &{\scriptstyle G}\Big\downarrow& &\Big\|& & \\
 0&\longrightarrow&V&\stackrel{i}\longrightarrow&\m&\stackrel{\mu}\longrightarrow&\n&\stackrel{p}\longrightarrow&\g&\longrightarrow&0.
 \end{array}\end{equation}
         The commutativity of the diagram indicates that we shall define $H:ker\pi\oplus_{\tal}V\to\m$ by $H(Y+v)=\sigma GY+iv$.
        $H$ is a pre-Lie homomorphism because $i\tal(X,Y)=\sigma(GX)\circ_{\m}\sigma(GY)-\sigma(GX\circ_{\n} GY)$ for $X,Y\in ker\pi$.
        
        We next show that $H$ and $G$ respect the actions of crossed modules. Take $H(X\bcl (Y+v))=GX\bcl H(Y+v), X\in F,Y+v\in ker\pi\oplus V$ as an example. This is equivalent to:
        \begin{align*}
             \sigma G(X\circ_F Y)+i\tal(X,Y)+i(X\circ_l v)&=GX\bcl\sigma GY+GX\bcl iv\\
             &=\mu\sigma GX\bcl\sigma GY+GX\bcl iv\\
             &=\sigma GX\circ_{\m}\sigma GY+GX\bcl iv.
       \end{align*} 
Since \begin{align*}
    GX\bcl iv-i(X\circ_lv)&=GX\bcl iv-i(\pi X\circ_lv)\\
    &=GX\bcl iv-\rho\pi X\bcl iv\\
    &=(G-\rho\pi)X\bcl iv,
\end{align*}
we have $i\tal(X,Y)=\sigma(GX)\circ_{\m}\sigma(GY)-\sigma(GX\circ_{\n} GY)+(G-\rho\pi)X\bcl v$.
       We claim that $(G-\rho\pi)X\circ v=0$ for all $X\in F,v\in V$, hence $H,G$ respect the actions by the definition of $\tal$. For $X\in ker\pi$, $(G-\rho\pi)X=GX\in ker\ p=im\ \mu$. Thus $GX=\mu m$ for some $m\in\m$, and $GX\circ v=\mu m\circ i(v)=m\circ i(v)=m\circ\mu i(v)=0$.
       For $X=(x,0)\in\g\oplus 0$, we have $(G-\rho\pi)X=G(x,0)-\rho x=0$.     
       
       $\bullet$ To sum up, $ST$ maps equivalence class of crossed module \eqref{cm 1} to $[\theta]$, and then to equivalence class of crossed module \eqref{cm 2}, which equals to the original one, hence $ST=1_{CExt}$.
       \end{proof}

       \begin{lemma}\label{4}
           $TS=1_{H^3(\g,V)}$
       \end{lemma}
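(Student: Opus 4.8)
The goal is to verify that $T(S[\theta])=[\theta]$ for every $3$-cocycle $\theta\in C^3(\g,V)$. The plan is to run $S$ and then $T$ with the most economical choices, which is legitimate since Lemma \ref{1} and Lemma \ref{2} guarantee both maps are independent of all auxiliary choices. Applying $S$, I fix $\tal\in C^2(F,V)$ with $d\tal=\pi^*\theta$ and obtain the extension \eqref{eq 5} with $\m=K\oplus V$, product \eqref{pre lie m}, and twisted action \eqref{tal action}. To then apply $T$, I would take the section $\rho$ of $\pi\colon F\to\g$ to be the canonical inclusion of generators $\rho\colon\g\hookrightarrow F$, and the section $\sigma$ of $\mu$ to be the inclusion $K\hookrightarrow K\oplus V=\m$. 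Writing $w(x,y):=\rho(x)\circ_F\rho(y)-\rho(x\circ_\g y)$ for the defect of $\rho$, exactness gives $w(x,y)\in K$, and $\beta=\sigma w$ is just $w$ viewed inside $\m$ with zero $V$-component.

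The core step is to evaluate the output cocycle $\theta'$ of \eqref{theta1} and read off its $V$-component (its $K$-component vanishes, by the $\mu\theta'=0$ computation in the proof of Theorem \ref{m th}). Since $\beta$ is $K$-valued, the three terms of \eqref{theta1} not involving the action contribute nothing to $V$, while expanding the four action terms through \eqref{tal action} and keeping only the $V$-parts yields
\[\theta'(x,y,z)=\tal(\rho(x),w(y,z))-\tal(\rho(y),w(x,z))+\tal(w(y,x),\rho(z))-\tal(w(x,y),\rho(z)).\]

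In parallel, I would express $\theta$ itself through $\tal$. Because $\pi\rho=\mathrm{id}$ and $\pi^*\theta=d\tal$, evaluating on generators gives $\theta(x,y,z)=(d\tal)(\rho(x),\rho(y),\rho(z))$, which expands by the degree-$2$ coboundary formula with the $F$-action on $V$ factoring through $\pi$ (so $\rho(x)\circ_l v=x\circ_l v$, and similarly on the right). I then set $\lambda(x,y):=\tal(\rho(x),\rho(y))\in C^2(\g,V)$ and compare $d\lambda$ with this expansion: the four module-action terms cancel identically, and the three remaining terms differ only through the replacements $\rho(x)\circ_F\rho(z)\leftrightarrow\rho(x\circ_\g z)$ and $[\rho(x),\rho(y)]_F\leftrightarrow\rho([x,y])$. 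The decisive manipulation is to collapse these via bilinearity of $\tal$ and linearity of $\rho$, using $\rho(x)\circ_F\rho(z)-\rho(x\circ_\g z)=w(x,z)$ and $[\rho(x),\rho(y)]_F-\rho([x,y])=w(x,y)-w(y,x)$. This turns $\theta-d\lambda$ into exactly the four-term expression above, i.e. $\theta-d\lambda=\theta'$, whence $[\theta']=[\theta]$ and $TS=1_{H^3(\g,V)}$.

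The only real difficulty is bookkeeping: correctly splitting $K$- and $V$-components under the twisted action \eqref{tal action}, and aligning the non-module terms of the expansions of $\theta$ and $d\lambda$ so that the bilinearity trick folds them neatly onto the $w$-terms of $\theta'$. Conceptually the identity is forced once the canonical sections are chosen, and the freedom to choose them is exactly what the well-definedness of $T$ (Lemma \ref{1}) provides.
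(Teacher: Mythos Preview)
Your proposal is correct and follows essentially the same route as the paper. Both arguments choose the inclusion $\sigma_{\tal}\colon K\hookrightarrow K\oplus V$ as section of $\mu$, identify $\beta(x,y)=(w(x,y),0)$, extract the $V$-component of $\theta'$ from the twisted action \eqref{tal action}, and then compare with $\theta=(d\tal)(\rho x,\rho y,\rho z)$ to obtain $\theta-\theta'=d\lambda$ with $\lambda=\tal(\rho\cdot,\rho\cdot)$ (the paper's $s^*\tal$). The only cosmetic difference is that the paper carries out the comparison after applying $\pi^*$ and invokes injectivity of $\pi^*$ at the end, whereas you compute directly on $\g$; the two are the same identity read at different levels.
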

       \begin{proof}
       We begin by recalling the construction of $TS$. Given a $[\theta]\in H^3(\g,V)$, we can find $\tal\in C^2(F,V)$ such that $\pi^*\theta=d\tal$, then we define a crossed module \eqref{cm 2} by this $\tal$. We next choose sections $s$ of $\pi$ and $\sigma_{\tal}$ of $\mu_{\tal}$ to get a map $\beta\in C^2(\g,ker\pi\oplus_{\tal}V)$, which defines a new $3$-cocycle $\theta'$. Our goal is to show $[\theta]=[\theta']$. In fact, $\theta'-\theta=d(s^*\tal)$. We only need to prove $\pi^*\theta'-\pi^*\theta=\pi^*d(s^*\tal)$ since $\pi^*$ is injective.
        
        In particular, we can take $\sigma_{\tal}: ker\pi\to ker\pi\oplus_{\tal}V$ to be the inclusion, i.e., $\sigma_{\tal}(X)=(X,0)$ for $X\in ker\pi$. 
        Then $\beta(x,y)=(sx\circ sy-s(x\circ y),0)$ for $x,y\in\g$. 
        Denote the projection of $ker\pi\oplus_{\tal}V$ onto $ker\pi$ by $pr$. Then by \eqref{theta1} and \eqref{tal action} we have
        \begin{align*}
         \pi^*\theta'(X,Y,Z)
           &=s\pi X\circ pr\beta(\pi Y,\pi Z)+pr\beta(\pi Y,\pi X)\circ s\pi Z\\
           &\ \ -\beta(\pi Y,\pi X\circ \pi Z)-\beta(\pi Y\circ \pi X, \pi Z)\\
           &\ \ +\tal(s\pi X,pr\beta(\pi Y,\pi Z))+\tal(pr\beta(\pi Y,\pi X),s\pi Z)-c.p.(X,Y)\\
           &=\tal(s\pi X,pr\beta(\pi Y,\pi Z))+\tal(pr\beta(\pi Y,\pi X),s\pi Z)-c.p.(X,Y),\\
           &=\tal(s\pi X,s\pi Y\circ s\pi Z-s(\pi Y\circ\pi Z))\\
           &\ \ +\tal(s\pi Y\circ s\pi X-s(\pi Y\circ\pi X),s\pi Z)-c.p.(X,Y),
        \end{align*}
        the second equation holds since the image of $\pi^*\theta'$ is in $V$.
        By \eqref{tal on g} we have
        \begin{align*}
           \pi^*\theta(X,Y,Z)&=d\tal(s\pi X,s\pi Y,s\pi Z),\\
           &=s\pi X\circ\tal(s\pi Y,s\pi Z)+\tal(s\pi Y,s \pi X)\circ s\pi Z\\
           &\ \ -\tal(s\pi Y,s\pi X\circ s\pi Z)-\tal(s\pi Y\circ s\pi X, s\pi Z)-c.p.(X,Y),
        \end{align*} 
        for $X,Y,Z\in F$. So
        \begin{align*}
            \pi^*(\theta-\theta')(X,Y,Z)&=s\pi X\circ\tal(s\pi Y,s\pi Z)+\tal(s\pi Y,s \pi X)\circ s\pi Z\\
           &-\tal(s\pi Y,s(\pi X\circ \pi Z))-\tal(s(\pi Y\circ \pi X), s\pi Z)-c.p.(X,Y)\\
           &=\pi^*d(s^*\tal)(X,Y,Z)
       \end{align*}
        as required.
    \end{proof}
    $\bullet$ Relation with abelian extensions of the Lie algebra.
    
Let $A$ be a Lie algebra, $W$ be a $A$-module. An abelian extension of $A$ by $W$ is an exact sequence of Lie algebras:
\[0\to W\to \hat{A}\to A\to 0,\]
where all arrows are homomorphisms of Lie algebras.  Two abelian extension of $A$ by $W$ are equivalent if there exists a homomorphism $\gamma$ such that the following diagram commutes:
\begin{equation*}
\begin{array}{ccccccccc}
0&\longrightarrow& W&\longrightarrow&\hat{A}_2&\longrightarrow&A&\longrightarrow&0\\
 &            &\Big\|&       &{\scriptstyle \gamma}\Big\downarrow&          &\Big\|& &\\
 0&\longrightarrow&W&\longrightarrow&\hat{A}_1&\longrightarrow&A&\longrightarrow&0.
 \end{array}
 \end{equation*}
 Note that in this case $\gamma$ must be an isomorphism.
Denote by $AExt(A,W)$ the set of equivalence classes of abelian extensions of $A$ by $W$. 

Let $\g$ be a pre-Lie algebra, and $V$ be a $\g$-module. Then by remark $2.11$ of \cite{GLST}, there is a $\g^c$-module structure on $\Hom (\g,V)$ given by
\[(x\tr f)(y):=x\circ f(y)+f(x)\circ y-f(x\circ y),\qquad \forall f\in\Hom(\g,V),\ x,y\in\g.\]
    \begin{corollary}\label{cor of m th}
        There is a one-to-one correspondence between $CExt(\g,V)$ and $AExt(\g^c,\Hom(\g,V))$.
    \end{corollary}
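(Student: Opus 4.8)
The plan is to establish the correspondence by combining the main theorem of this paper with the classical isomorphism $H^2_{\mathrm{Lie}}(A,W)\cong AExt(A,W)$ together with the dimension-shifting formula from \cite{D} that already appeared in the proof of Lemma \ref{free}. By Theorem \ref{m th} we have a bijection $CExt(\g,V)\cong H^3_{preLie}(\g,V)$, so it suffices to produce a bijection $H^3_{preLie}(\g,V)\cong AExt(\g^c,\Hom(\g,V))$. The right-hand side is classified by $H^2_{\mathrm{Lie}}(\g^c,\Hom(\g,V))$ via the standard correspondence between abelian extensions of a Lie algebra and its second cohomology; this is exactly the statement recalled in the discussion preceding the corollary, where the $\g^c$-module structure on $\Hom(\g,V)$ is given by the formula involving $x\tr f$.

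First I would invoke the isomorphism $H^n_{preLie}(\g,V)\cong H^{n-1}_{\mathrm{Lie}}(\g^c,\Hom(\g^c,V))$ from Theorem $3.4$ of \cite{D}, specialized to $n=3$, to get $H^3_{preLie}(\g,V)\cong H^2_{\mathrm{Lie}}(\g^c,\Hom(\g^c,V))$. The essential point is that the $\g^c$-module $\Hom(\g^c,V)$ appearing here, with its representation $\rho(X)f(Y)=[X,f(Y)]-f([X,Y])$, must be identified with the $\g^c$-module $\Hom(\g,V)$ equipped with the action $x\tr f$ defined just above the corollary. Since $\g$ and $\g^c$ share the same underlying vector space, $\Hom(\g^c,V)$ and $\Hom(\g,V)$ coincide as vector spaces, so what needs checking is that the two module structures agree. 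I would verify this by unwinding the definition $[X,v]=X\circ_l v-v\circ_r X$ of the $\g^c$-action on $V$ and substituting it into $\rho(X)f(Y)$, then comparing with $(x\tr f)(y)=x\circ f(y)+f(x)\circ y-f(x\circ y)$; the cross terms $v\circ_r X$ should reorganize precisely into the $f(x)\circ y$ term once one uses $[X,Y]=X\circ Y-Y\circ X$ and collects the left- and right-module contributions.

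Chaining the three bijections then yields
\[
CExt(\g,V)\ \cong\ H^3_{preLie}(\g,V)\ \cong\ H^2_{\mathrm{Lie}}(\g^c,\Hom(\g,V))\ \cong\ AExt(\g^c,\Hom(\g,V)),
\]
which is the desired one-to-one correspondence.

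The main obstacle I anticipate is the module-identification step: confirming that the $\g^c$-module structure $\rho$ on $\Hom(\g^c,V)$ coming from the dimension-shift of \cite{D} is literally the same as the $\tr$-action on $\Hom(\g,V)$ from \cite{GLST}. These are defined in two different references with different conventions, so the identification is not purely formal, and a sign or a left/right mismatch would break the argument. If the two actions turned out to differ by an automorphism of the module rather than being equal on the nose, I would instead argue that any such module isomorphism induces an isomorphism on $H^2_{\mathrm{Lie}}$ and hence on $AExt$, so the correspondence would still go through; but the cleanest outcome is a direct verification that the formulas coincide. A secondary, purely bookkeeping concern is checking that the correspondence respects the \emph{given} representation data so that one lands in the correct fiber, i.e. that the induced representation of $\g$ on $V$ matches the one fixed in the definition of $CExt(\g,V)$; this should follow automatically from how the actions are transported through each isomorphism.
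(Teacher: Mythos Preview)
Your approach is essentially the paper's: chain the main theorem $CExt(\g,V)\cong H^3_{preLie}(\g,V)$ with a dimension-shift isomorphism $H^3_{preLie}(\g,V)\cong H^2_{\mathrm{Lie}}(\g^c,\Hom(\g,V))$ and the classical bijection $H^2_{\mathrm{Lie}}\cong AExt$. The only difference is in the middle step. The paper cites \cite{GLST} directly, which furnishes the dimension shift with the $\tr$-module structure that is stated immediately before the corollary, via the explicit cochain isomorphism $(\phi f)(x_1,\dots,x_{n-1})(x_n)=f(x_1,\dots,x_n)$. You instead plan to invoke \cite{D} as in Lemma~\ref{free} and then identify that module structure with $\tr$.

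Your anticipated obstacle is real and your ``cleanest outcome'' does not occur: unwinding the $\rho$-action from Lemma~\ref{free} gives
\[
\rho(x)f(y)=x\circ_l f(y)-f(y)\circ_r x-f(x\circ y)+f(y\circ x),
\]
whereas
\[
(x\tr f)(y)=x\circ_l f(y)+f(x)\circ_r y-f(x\circ y),
\]
and these are not the same formula (the $f(x)\circ_r y$ term in $\tr$ has no counterpart in $\rho$, and conversely for $-f(y)\circ_r x+f(y\circ x)$). So the direct verification you sketch would fail, and you would be forced onto your backup plan of producing a module isomorphism, which is extra work not present in the paper. The cleaner route, and the one the paper takes, is simply to cite \cite{GLST} for the dimension shift with the $\tr$-action already in place, so that no identification step is needed at all.
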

    \begin{proof}
        By \cite{GLST}, there is an isomorphism $\phi: H^n_{preLie}(\g,V)\to H^{n-1}_{Lie}(\g^c,\Hom(\g,V))$ induced by the cochain map $\phi:\Hom(\wedge^{n-1}\g\otimes\g,V)\to\Hom(\wedge^{n-1}\g^c,\Hom(\g,V))$, which is defined by 
\[(\phi f)(x_1,\cdots,x_{n-1})(x_n):=f(x_1,\cdots,x_n).\]
        Conbine it with theorem \ref{m th} and the well-known bijection between $ H^2_{Lie}(\g^c,\Hom(\g,V))$ and $AExt(\g^c,\Hom(\g,V))$ yields the desire correspondence:
        \begin{equation*}
\begin{array}{ccc}
H^3_{preLie}(\g,V)&\stackrel{1-1}\longrightarrow& H^2_{Lie}(\g^c,\Hom(\g,V))\\
{\scriptstyle 1-1}\Big\uparrow&       &{\scriptstyle 1-1}\Big\downarrow\\
 CExt(\g,V)&\stackrel{Corollary\  \ref{cor of m th}}\longrightarrow&AExt(\g^c,\Hom(\g,V)).
 \end{array}
 \end{equation*}
    \end{proof}

\section{Some relations on crossed modules of pre-Lie algebras}

In this section, we summarize some relations between crossed modules of pre-Lie algebras and crossed modules of other algebras, which are similar to the relations between these algebras.

Denote the commutator Lie algebra of any associative algebra $\m$ by $\m^c$.
Let $(\m\xrightarrow{\mu}\n,\cdot_l,\cdot_r)$ be a crossed module of associative algebras, then as mentioned in \cite{DIKL}, $(\m^c\xrightarrow{\mu}\n^c,\triangleright)$ is a crossed module of Lie algebras with the action $\triangleright: \n^c\wedge\m^c\to\m^c$ given by $x\triangleright u:=x\cdot_l u-u\cdot_r x$ for $x\in\n^c,u\in\m^c$. 
\begin{proposition}[\cite{S}, proposition $4.9$]\label{S pro 4.9}
    Let $((\m,\circ_m)\xrightarrow{\mu}(\n,\circ_n),\circ_l,\circ_r)$ be a crossed module of pre-Lie algebras and $\m^c,\n^c$ be their commutator Lie algebras. Then $(\m^c\xrightarrow{\mu}\n^c,\triangleright)$ is a crossed module of Lie algebras, where $x\triangleright u:=x\circ_l u-u\circ_r x$ for $x\in\n,u\in\m$.
\end{proposition}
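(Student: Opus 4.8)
The plan is to unwind the definition of a crossed module of Lie algebras and check its four constituent conditions for the datum $(\m^c\xrightarrow{\mu}\n^c,\triangleright)$, where $[u,v]_{\m^c}=u\circ_m v-v\circ_m u$, $[x,y]_{\n^c}=x\circ_n y-y\circ_n x$, and $x\triangleright u=x\circ_l u-u\circ_r x$. Those conditions are: (i) $\mu$ is a Lie-algebra homomorphism; (ii) $\triangleright$ is a representation of $\n^c$ on the underlying vector space of $\m^c$; (iii) each operator $x\triangleright(-)$ is a derivation of the Lie bracket of $\m^c$ (so that $\triangleright$ is a genuine Lie action); and (iv) the two crossed-module identities $\mu(x\triangleright u)=[x,\mu(u)]_{\n^c}$ and $\mu(u)\triangleright v=[u,v]_{\m^c}$. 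Condition (i) is immediate: $\mu$ preserves $\circ_m$, hence preserves commutators, so $\mu([u,v]_{\m^c})=[\mu(u),\mu(v)]_{\n^c}$.

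For (ii) I would invoke the general mechanism by which a representation $(\circ_l,\circ_r)$ of a pre-Lie algebra induces a representation of its sub-adjacent Lie algebra via $x\triangleright u=x\circ_l u-u\circ_r x$. Expanding both $[x,y]_{\n^c}\triangleright u$ and $x\triangleright(y\triangleright u)-y\triangleright(x\triangleright u)$, the purely left terms cancel since $\circ_l$ is already a representation of $\n^c$, namely $[x,y]_{\n^c}\circ_l u=x\circ_l(y\circ_l u)-y\circ_l(x\circ_l u)$; the remaining terms are reconciled by the representation axiom for $\circ_r$,
\[(x\circ_l u)\circ_r y-x\circ_l(u\circ_r y)=(u\circ_r x)\circ_r y-u\circ_r(x\circ_n y),\]
applied with the two orderings of $x,y$. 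Notably this step uses only the representation-level axioms, not the extra action axioms.

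The crux is condition (iii). I would expand
\[x\triangleright[u,v]_{\m^c}=x\circ_l(u\circ_m v)-x\circ_l(v\circ_m u)-(u\circ_m v)\circ_r x+(v\circ_m u)\circ_r x\]
and compare with $[x\triangleright u,v]_{\m^c}+[u,x\triangleright v]_{\m^c}$. The two extra identities in the definition of an action are exactly what reconciles them: the first action axiom rewrites the left-action terms $x\circ_l(u\circ_m v)$ and $x\circ_l(v\circ_m u)$, while the second action axiom rewrites the right-action terms $(u\circ_m v)\circ_r x$ and $(v\circ_m u)\circ_r x$; after these substitutions the eight resulting summands match the derivation expansion term by term. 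This is where the bookkeeping is heaviest, and it is the main obstacle — not because any single identity is deep, but because it is precisely here that both "extra" action axioms are needed, in contrast to the rest of the proof.

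Finally, (iv) is a pair of one-line consequences of the crossed-module relations. Equivariance follows from $\mu(x\circ_l u)=x\circ_n\mu(u)$ and $\mu(u\circ_r x)=\mu(u)\circ_n x$:
\[\mu(x\triangleright u)=\mu(x\circ_l u)-\mu(u\circ_r x)=x\circ_n\mu(u)-\mu(u)\circ_n x=[x,\mu(u)]_{\n^c}.\]
The Peiffer identity follows from $\mu(u)\circ_l v=u\circ_m v$ together with $v\circ_r\mu(u)=v\circ_m u$, the latter being $w\circ_r\mu(v)=w\circ_m v$ with the arguments exchanged:
\[\mu(u)\triangleright v=\mu(u)\circ_l v-v\circ_r\mu(u)=u\circ_m v-v\circ_m u=[u,v]_{\m^c}.\]
Assembling (i)--(iv) shows that $(\m^c\xrightarrow{\mu}\n^c,\triangleright)$ is a crossed module of Lie algebras.
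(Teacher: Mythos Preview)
The paper does not supply its own proof of this proposition: it merely records the statement and cites \cite{S}, Proposition~4.9. So there is nothing to compare against at the level of argument. Your proposal is a correct and complete direct verification: (i) and (iv) are immediate from the crossed-module axioms for $(\mu,\circ_l,\circ_r)$, (ii) uses only the representation axioms for $(\circ_l,\circ_r)$, and (iii) is precisely where the two additional action identities enter, exactly as you identify. This is the natural proof and almost certainly the one in the cited reference; there is no gap.
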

\begin{proposition}[\cite{ZL}, proposition $4.8$]\label{ZL pro 4.8}
    Let $\big((\m,[\cdot,\cdot]_m,T_m)\xrightarrow{\mu}(\n,[\cdot,\cdot]_n,T_n),\rho\big)$ be a crossed module of Rota-Baxter Lie algebras.  Define $\circ_m:\m\otimes\m\to\m$, $\circ_n:\n\otimes\n\to\n$, and $\circ_l:\n\otimes\m\to\m$, $\circ_r:\m\otimes\n\to\m$ by
    \begin{align*}
        x\circ_n y&:=[T_nx,y]_n,\qquad
        u\circ_m v:=[T_mu,v]_m,\\     
        x\circ_l u&:=\rho(T_nx)u,\qquad
        u\circ_r x:=-\rho(x)(T_mu),
    \end{align*}
    for $x,y\in\n,u,v\in\m$.
    Then $\big((\m,\circ_m)\xrightarrow{\mu}(\n,\circ_n),\circ_l,\circ_r\big)$ is a crossed module of pre-Lie algebras.
\end{proposition}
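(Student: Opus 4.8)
The plan is to exploit the standard fact that a weight-zero Rota-Baxter Lie algebra $(\g,[\cdot,\cdot],T)$ becomes a pre-Lie algebra under $x\circ y:=[Tx,y]$, and to package all the required pre-Lie and action identities into a single semidirect-product Rota-Baxter Lie algebra. First I would recall (or verify directly from the Rota-Baxter relation and the Jacobi identity) that $(\n,\circ_n)$ and $(\m,\circ_m)$, with $x\circ_n y=[T_nx,y]_n$ and $u\circ_m v=[T_mu,v]_m$, are pre-Lie algebras: this is exactly the weight-zero Rota-Baxter-to-pre-Lie construction applied to each factor.

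Next I would form the semidirect product $\n\ltimes_\rho\m$, namely $\n\oplus\m$ with bracket $[(x,u),(y,v)]=([x,y]_n,\rho(x)v-\rho(y)u+[u,v]_m)$ and the diagonal operator $T=(T_n,T_m)$. The crux of the whole argument is the verification that $T$ is again a Rota-Baxter operator on this Lie algebra: the $\n$-component of the Rota-Baxter relation is just the relation in $\n$, the pure $\m$ part is the relation in $\m$, and the cross terms reduce to the compatibility $\rho(T_nx)(T_mu)=T_m\big(\rho(T_nx)u+\rho(x)(T_mu)\big)$, which is precisely the operator-compatibility built into the definition of a crossed module of Rota-Baxter Lie algebras in \cite{ZL}. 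Granting this, the associated pre-Lie product $(x,u)\circ(y,v)=[T(x,u),(y,v)]$ has components exactly $x\circ_n y$ in $\n$ and $x\circ_l v+u\circ_r y+u\circ_m v$ in $\m$, where $x\circ_l u=\rho(T_nx)u$ and $u\circ_r x=-\rho(x)(T_mu)$; that is, the pre-Lie algebra attached to $\n\ltimes_\rho\m$ is literally the pre-Lie semidirect product of $\n$ and $\m$ along $(\circ_l,\circ_r)$.

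Reading the left-symmetry of this one pre-Lie product componentwise then yields, for free, everything needed for $(\circ_l,\circ_r)$ to be an action of $\n$ on $\m$: specializing the associator to elements supported in $\n$ recovers the pre-Lie axiom of $\n$, to elements supported in $\m$ the pre-Lie axiom of $\m$, and the mixed specializations give precisely the representation identity and the two extra action equalities from the definitions in Section~2. This step absorbs essentially all of the computation, since the action axioms of a pre-Lie action are by design equivalent to left-symmetry of the semidirect product.

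It then remains to check that $\mu$ is a crossed module of pre-Lie algebras, which I would do directly from the Rota-Baxter crossed-module axioms: $\mu T_m=T_n\mu$ and $\mu$ a Lie homomorphism, $\n$-equivariance $\mu(\rho(x)a)=[x,\mu(a)]_n$, and the Peiffer identity $\rho(\mu(a))b=[a,b]_m$. For example, $\mu(u\circ_m v)=\mu[T_mu,v]_m=[T_n\mu(u),\mu(v)]_n=\mu(u)\circ_n\mu(v)$; the mixed relations $\mu(x\circ_l u)=x\circ_n\mu(u)$ and $\mu(u\circ_r x)=\mu(u)\circ_n x$ follow from equivariance together with $\mu T_m=T_n\mu$; and both halves of $\mu(u)\circ_l v=u\circ_m v=u\circ_r\mu(v)$ reduce, after inserting the Rota-Baxter operator, to the Lie Peiffer identity. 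Beyond the Rota-Baxter check that makes the semidirect product legitimate, the only real obstacle I anticipate is bookkeeping the operators $T_m,T_n$ and the sign in $u\circ_r x=-\rho(x)(T_mu)$ consistently; if one forgoes the semidirect-product shortcut and verifies the action axioms by hand, the difficulty instead becomes keeping the repeated uses of the compatibility $\rho(T_nx)(T_mu)=T_m(\rho(T_nx)u+\rho(x)(T_mu))$ and the derivation property of $\rho$ organized.
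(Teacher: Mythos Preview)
The paper does not prove this proposition; it merely cites it from \cite{ZL}, Proposition~4.8, without argument. Your proposal therefore supplies more than the paper does, and the approach is correct. The key idea---packaging the data into the semidirect-product Lie algebra $\n\ltimes_\rho\m$ with the diagonal operator $T=(T_n,T_m)$, checking that $T$ is again Rota--Baxter there, and then reading off all the pre-Lie action axioms as specializations of the single left-symmetry identity on the induced pre-Lie product---is a genuine economy over a direct axiom-by-axiom verification, which is what one would expect the cited source to do. The remaining checks you outline for $\mu$ (pre-Lie homomorphism, the two equivariance identities, and both halves of the Peiffer identity) are indeed immediate from the Rota--Baxter crossed-module axioms combined with $\mu T_m=T_n\mu$, exactly as you sketch.

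The only point to make airtight is the one you already flag: your semidirect-product shortcut hinges on the compatibility $\rho(T_nx)(T_mu)=T_m\big(\rho(T_nx)u+\rho(x)(T_mu)\big)$ being part of the definition of a Rota--Baxter Lie crossed module in \cite{ZL}. You should quote that axiom precisely from the reference rather than assume it, since without it $T=(T_n,T_m)$ need not be Rota--Baxter on $\n\ltimes_\rho\m$ and the whole argument collapses. Apart from that citation check, the proof goes through.
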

The notion of crossed module of dendriform algebras is given in \cite{Das dend}, see definition $6.6$ and also remark $2.5$.
It gives rise to a crossed module of associative algebra, see subsection $6.6$ of \cite{Das dend}. The following proposition shows that it also gives a crossed module of pre-Lie algebras.
\begin{proposition}\label{dend pl}
   Let $\big((\m,\succ_m,\prec_m)\xrightarrow{\mu}(\n,\succ_n,\prec_n),\succ,\prec\big)$ be a crossed module of dendriform algebras. Define $\circ_m:\m\otimes\m\to\m$, $\circ_n:\n\otimes\n\to\n$, and $\circ_l:\n\otimes\m\to\m$, $\circ_r:\m\otimes\n\to\m$ by
    \begin{align*}
        x\circ_n y&:=x\succ_n y-y\prec_n x,\qquad
        u\circ_m v:=u\succ_m v-v\prec_m u,\\     
        x\circ_l u&:=x\succ u-u\prec x,\qquad
        u\circ_r x:=u\succ x-x\prec u,
    \end{align*}
    for $x,y\in\n,u,v\in\m$.
    Then $\big((\m,\circ_m)\xrightarrow{\mu}(\n,\circ_n),\circ_l,\circ_r\big)$ is a crossed module of pre-Lie algebras.
\end{proposition}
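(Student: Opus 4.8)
The plan is to verify, one at a time, each defining property of a crossed module of pre-Lie algebras, in each case substituting the dendriform definitions $a\circ b=a\succ b-b\prec a$ and reducing the identity to the crossed module axioms of the given dendriform structure.

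First I would confirm that $(\m,\circ_m)$ and $(\n,\circ_n)$ are pre-Lie algebras. This rests on the classical fact that any dendriform algebra $(A,\succ,\prec)$ carries a pre-Lie structure defined by $a\circ b:=a\succ b-b\prec a$; one obtains it by expanding the associator $(a,b,c)=(a\circ b)\circ c-a\circ(b\circ c)$ and checking, term by term, that its left-symmetry \eqref{defi: pre} follows from the three dendriform axioms. Applying this to the algebras $\m$ and $\n$ yields the two underlying pre-Lie algebras, and since $\mu$ is a morphism of dendriform algebras it is automatically a morphism for $\circ_m$ and $\circ_n$.

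Second I would check that $(\circ_l,\circ_r)$ is an action of the pre-Lie algebra $\n$ on $\m$. The dendriform action $(\succ,\prec)$ of $\n$ on $\m$ supplies the four bimodule operations $\n\otimes\m\to\m$ and $\m\otimes\n\to\m$, and I would show that the combinations $x\circ_l u=x\succ u-u\prec x$ and $u\circ_r x=u\succ x-x\prec u$ satisfy: (i) $\circ_l$ makes $\m$ a module over the sub-adjacent Lie algebra $\n^c$; (ii) the mixed representation identity relating $\circ_l$ and $\circ_r$; and (iii) the two additional action equalities required of an action. Substituting the formulas, each of these becomes a fixed linear combination of the dendriform bimodule and action axioms, so the verification is mechanical once the correspondence between terms is set up.

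Finally I would verify the four crossed module relations. The two equivariance conditions $\mu(u\circ_r x)=\mu(u)\circ_n x$ and $\mu(x\circ_l u)=x\circ_n\mu(u)$ follow termwise from the dendriform equivariance of $\mu$, e.g.\ from $\mu(u\succ x)=\mu(u)\succ_n x$ and $\mu(x\prec u)=x\prec_n\mu(u)$, while the Peiffer-type identities $\mu(u)\circ_l v=u\circ_m v=u\circ_r\mu(v)$ follow from the corresponding dendriform Peiffer conditions such as $\mu(u)\succ v=u\succ_m v$ and $u\prec\mu(v)=u\prec_m v$. The main difficulty is not conceptual but organizational: the representation and action axioms each unfold into several dendriform terms, and one must match every term against the correct axiom without sign or ordering slips. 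The governing principle throughout is simply that the ``pre-Lie-ification'' $a\circ b=a\succ b-b\prec a$ is natural with respect to morphisms, bimodules, and the Peiffer compatibilities of a crossed module.
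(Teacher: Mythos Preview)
Your proposal is correct and follows essentially the same route as the paper: verify that the dendriform-to-pre-Lie construction $a\circ b=a\succ b-b\prec a$ carries the underlying algebras, the homomorphism $\mu$, the action, and the Peiffer identities termwise from the dendriform crossed module axioms to the pre-Lie ones. The paper is terser---it cites the pre-Lie structure from the literature and spells out only the equivariance $\mu(x\circ_l u)=x\circ_n\mu(u)$, leaving the remaining checks to ``similar considerations''---but the strategy is identical to yours.
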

\begin{proof}
    By proposition $5.2$ of \cite{B}, $(\m,\circ_m)$ and $(\n,\circ_n)$ are pre-Lie algebras. Since $\mu$ is a dendriform homomorphism, it is also a pre-Lie homomorphism. The fact that $(\succ,\prec)$ is an action of dendriform algebras implies that $(\circ_l,\circ_r)$ is an action of pre-Lie algebras. Moreover,  
    $\mu(x\circ_l u)=\mu(x\succ u-u\prec x)
        =x\succ_n\mu u-\mu u\prec_n x=x\circ_n\mu u$ for $x\in\n,u\in\m$. Similar considerations show that $\big((\m,\circ_m),(\n,\circ_n),\mu,\circ\big)$ is a crossed module of pre-Lie algebras.
\end{proof}
We summarize these relations in the following diagram:\\
\begin{center}
\begin{tikzpicture}
    [node distance=2cm]
    \node (Xasso) {$X_{asso}$};
    \node (XLie) [right of=Xasso] {$X_{Lie}$};
    \node (XRBLie) [right of=XLie] {$X_{RBLie}$};
    \node (XpreLie) [below of=XLie] {$X_{preLie}$};
    \node (Xdend) [left of=XpreLie] {$X_{dend}$};
    \draw [arrow] (Xasso) -- (XLie);
    \draw [arrow] (Xdend) -- (Xasso);
    \draw [arrow] (XpreLie) -- (XLie);
    \draw [arrow] (XRBLie) |- (XpreLie);
    \draw [arrow] (Xdend) -- (XpreLie);
\end{tikzpicture}
\end{center}
where $X_{asso},X_{Lie},X_{preLie},X_{RBLie},X_{dend}$ are abbreviations of crossed modules of associative, Lie, pre-Lie, Rota-Baxter Lie and dendriform algebras respectively, and the arrow means that we can obtain one crossed module from the other.

\thebibliography{12}
\bibitem{B} Bai, C. M. (2021). An introduction to pre-Lie algebras. In: Algebra and Applications 1: Non-associative Algebras and Categories, Wiley Online Library, pp. 245-273.   
\bibitem{Burde}  Burde, D. (2006). Left-symmetric algebras and pre-Lie algebras in geometry and physics. {\it Cent. Eur, J. Math.} 4:323-357.
\bibitem{UECM} Casas, J. M., Casado, R. F., Khmaladze, E., Ladra, M. (2014). Universal enveloping crossed module of a Lie crossed module. {\it Homology Homotopy Appl.} 16:143-158.
\bibitem{Casas cm} Casas, J. M., Casado, R. F., Khmaladze, E., Ladra, M. (2017). More on crossed modules in Lie, Leibniz, associative and diassociative algebras. {\it J. Algebra Appl.} 16:1750107. 
\bibitem{CLP} Casas, J. M., Ladra, M., Pirashvili, T. (2004). Crossed modules for Lie-Rinehart algebras. {\it J. Algebra} 274:192-201.
\bibitem{CL} Chapton, F., Livernet, M. (2001). Pre-Lie algebras and the rooted trees operad. {\it Int. Math. Res. Not.} 8:395-408.
\bibitem{Ch} Chapton, F. (2010). Free pre-Lie algebras are free as Lie algebras. {\it Canad. Math. Bull.} 53:425-437.
\bibitem{CLL} Chen, L. Y., Liu M. J., Liu, J. F. (2022). Cohomologies and crossed modules for pre-Lie Rinehart algebras. {\it J. Geom. Phys.} 176:104501.
\bibitem{Das dend} Das, A. (2022). Cohomology and deformations of dendriform algebras, and $Dend_{\infty}$-algebras. {\it Commun. Algebra.} 50:1544-1567.
\bibitem{DIKL} Donadze, G., Inassaridze, N., Khmaladze, E., Ladra, M. (2012). Cyclic homologies of crossed modules of algebras. {\it J. Noncommut. Geom.} 6:749-771.
\bibitem{D} Dzhumadil'daev, A. (1999). Cohomologies and deformations of right-symmetric algebras. {\it J. Math. Sci.} 93:836-876.
\bibitem{G} Gerstenhaber, M. (1963). The cohomolgy structure of an associative ring. {\it Ann. Math.} 78:267-288.
\bibitem{G 1964} Gerstenhaber, M. (1964). A uniform cohomology theory for algebras. {\it Pros. Nat. Acad. Sci. U.S.A.} 51:626-629.
\bibitem{G 1966} Gerstenhaber, M. (1966). On the deformation of rings and algebras: II. {\it Ann. Math.} 84:1-19.
\bibitem{GLST} Guan, A., Lazarev, A., Sheng, Y. H., Tang, R. (2020). Review of deformation theory I: concrete formulas for deformations of algebraic structures, {\it Adv. math. (China)} 49:257-277.
\bibitem{HS} Hilton, P. J., Stammbach, U. (1971). A course in homological algebra.  (Grad. Texts in Math., vol. 4) Berlin Heidelberg New York: Springer.
\bibitem{LL} Lang, H. L., Liu, Z. J. (2016). Crossed modules for Lie $2$-algebras. {\it Appl. Categor. Struct.} 24:53-78.
\bibitem{MSW} Ma, Q. F., Song, L. N., Wang, Y. (2023). Non-abelian extensions of pre-Lie algebras. {\it Commun. Algebra.} 51:1370-1382.
\bibitem{S} Sheng, Y. H. (2019). Categorification of pre-Lie algebras and solutions of $2$-graded classical Yang-Baxter equations. {\it Theo. Appl. Cate.} 34:269-294.
\bibitem{W} Wagemann, F. (2006). On Lie algebra crossed modules. {\it Commun. Algebra} 34:1699-1722.
\bibitem{Wh} Whitehead, J. (1941). On adding relations to homotopy groups. {\it Ann. Math.} 42:409-428.
\bibitem{Z} Zhang, T. Zinbiel $2$-algebras. arXiv:2104.12551.
\bibitem{ZL} Zhang, S. L., Liu, J. F. (2023). On Rota-Baxter Lie $2$-algebras. {\it Theo. Appl. Cate.} 39:545-566.
\end{document}